\title[]{Hopf algebroids and twists \\~ \\ for quantum projective spaces}
\author{Ludwik Dabrowski, Giovanni Landi, Jacopo Zanchettin}
\address[LD]{\textit{SISSA, Via Bonomea 265, 34136 Trieste, Italy}}
\email{dabrow@sissa.it}
\address[GL]{\textit{Universit\`a di Trieste, Via A. Valerio 12/1, 34127 Trieste, Italy}
	\newline \indent \textit{Institute for Geometry and Physics (IGAP) Trieste, Italy} \newline \indent \textit{and INFN, Sezione di Trieste, Trieste, Italy}.}
\email{landi@units.it}
\address{\textit{SISSA, Via Bonomea 265, 34136 Trieste, Italy}}
\email{jzanchet@sissa.it}
\date{}
\newcommand{\rosso}[1]{{\color{red}{\sl #1}}}
\newcommand{\sr}[1]{\rosso{$\overline{\rule{0pt}{1pt}\smash[t]{\text{\color{black}#1}}}$}}
\numberwithin{equation}{section}
\theoremstyle{plain} 
\newtheorem{thm}{Theorem}[section] 
\newtheorem{cor}[thm]{Corollary} 
\newtheorem{lem}[thm]{Lemma} 
\newtheorem{prop}[thm]{Proposition} 
\newtheorem{defn}[thm]{Definition}
\newtheorem{expl}[thm]{Example}
\newtheorem{remark}[thm]{Remark}
\newcommand{\zero}[1]{{#1}_{\scriptscriptstyle{(0)}}}
\newcommand{\one}[1]{{#1}_{\scriptscriptstyle{(1)}}}
\newcommand{\beq}{\begin{equation}}
	\newcommand{\eeq}{\end{equation}}
\newcommand{\beqn}{\begin{equation*}}
	\newcommand{\eeqn}{\end{equation*}}
\newcommand{\tuno}[1]{{#1}{}{}^{\scriptscriptstyle{\langle 1 \rangle}}}
\newcommand{\tdue}[1]{{#1}{}{}^{\scriptscriptstyle{\langle 2 \rangle}}}
\begin{document}
	\maketitle
	
	\begin{abstract}
		We study the relationship between antipodes on a Hopf algebroid \sr{$\mathcal{H}$} in the sense of B\"ohm--Szlachanyi and the group of twists that lies inside the associated convolution algebra. We specialize to the case of a faithfully flat $H$-Hopf--Galois extensions $B\subseteq A$ and related Ehresmann--Schauenburg bialgebroid. In particular, we find that the twists are in one-to-one correspondence with $H$-comodule algebra automorphism of $A$. We work out in detail the $U(1)$-extension ${\mathcal O}(\mathbb{C}P^{n-1}_q)\subseteq {\mathcal O}(S^{2n-1}_q)$ on the quantum projective space and show how to get an antipode on the bialgebroid out of the $K$-theory of the base algebra ${\mathcal O}(\mathbb{C}P^{n-1}_q)$. 
	\end{abstract}
	
	\tableofcontents
	
	\parskip =.75 ex

	\section*{Introduction}
	The relationship between Hopf algebroids and groupoids is similar in spirit to the duality between Hopf algebras and groups.
	At the algebraic level, a Hopf algebroid is, roughly, a Hopf algebra over a (possibly) noncommutative algebra rather than over a field.
	While the algebra and coalgebra structures have been generalized in this direction and are well-understood by now \cite{brz-wis,tak_gpalg},
	for a complete notion of a Hopf algebroid there is not a wide consensus yet.
	The major issue regards the antipode map which, for a Hopf algebra $H$ is a bialgebra map $S:H\to H^{copop}$.
	In contrast, for a bialgebroid $\mathcal{H}$, the space $\mathcal{H}^{copop}$ is not a bialgebroid in the same sense 
	as $\mathcal{H}$.
	A first definition of an antipode in this setting appeared in \cite{lu1996hopf} and later in \cite{bohm2004hopf,bohm2019alternative},
	while an abstract category-theoretical approach that does not refer to an antipode can be found in \cite{day-street-quantum,schau_dd}. The latter definition turns out to be weaker than the former since a Hopf algebroid with an (invertible) antipode in the sense of \cite{bohm2004hopf,bohm2019alternative} is also a Hopf algebroid in the sense of \cite{schau_dd} (see Proposition~\ref{prop:full_is_Hopf} below). The other way around is not true in general: there are bialgebroids that satisfy the requirement of \cite{schau_dd} but they do not admit any antipode \cite{rovi2014hopf}. In \cite{bohm2019alternative} the notion of twist of an antipode is introduced. Twists are elements in  
	in the convolution algebra of the bialgebroid $\mathcal{H}$  that are invertible, act trivially on $1_{\mathcal{H}}$ and their action is multiplicative. Theorem 4.3 in \cite{bohm2019alternative} relates twists with antipodes on a bialgebroid, when the latter exist. 
	
	In the present work, we address the problem of the existence of an antipode for a special case, that is the Ehresemann-Schauenburg (E-S) bialgebroid associated with a Hopf--Galois extension $B\subseteq A$, which is a noncommutative version of the gauge groupoid associated with a principal $G$-bundle $P\to X$ \cite{mackenzie1987lie}. The latter is the Lie groupoid with total space the quotient $(P\times P)/G$ of the diagonal action and base space $X$. 
	Our Definition~\ref{defn:twists}  of a twist does not assume the condition in the definition 4.1 of \cite{bohm2019alternative}, a condition involving the antipode. Thus twists can be given also for bialgebroids. 
	Then on a general level, in Theorem~\ref{thm:Bohm_thm} we prove that once there is an antipode, the collection of all antipodes is in one-to-one correspondence with the group of twists as in \cite{bohm2019alternative}. 
	For the E-S bialgebroid we characterize the group of twists showing that they are in bijective  correspondence with unital comodule algebra automorphism of the total space algebra $A$. As an example we study in details the  $U(1)$-Hopf--Galois extension ${\mathcal O}(\mathbb{C}P^{n-1}_q)\subseteq {\mathcal O}(S^{2n-1}_q)$ on the quantum projective space $\mathbb{C}P^{n-1}$ and in addition we show how to construct an antipode starting from the $K$-theory of the base space algebra ${\mathcal O}(\mathbb{C}P^{n-1}_q)$.
	
	The outline of the paper is as follows. In the first section, we recall the basic algebraic concepts, and in the second we do an overview of the various definitions of Hopf algebroid with some basic proofs about  
	the convolution algebra of a bialgebroid. In the third section we state and prove that the $\mathrm{flip}$ is always an antipode whenever the structure Hopf algebra of a Hopf--Galois extension is commutative (this is a known result, but there is no explicit proof in the literature); moreover Proposition~\ref{prop:group_iso} relates the twists with comodule algebra automorphisms. Finally in the fourth section we study in details the $U(1)$-extension ${\mathcal O}(\mathbb{C}P^{n-1}_q)\subseteq {\mathcal O}(S^{2n-1}_q)$ on the quantum projective space.

	\section{Algebraic preliminaries}
	In this section we recall basic algebraic concepts of comodule algebra extensions 
	and conditions for them to be 
	Hopf--Galois (we refer to \cite{Montgomery1993HopfAA} for detais).  In the following $\mathbb{K}$ denotes a field, and we use the convention 
	$\otimes=\otimes_{\mathbb{K}}$. An algebra is denoted by $(A,m_A,\nu_A)$ where $m_A:A\otimes A \to A$ is the product and $\nu_A:\mathbb{K} \to A$ is the unit; $(C,\Delta_C,\epsilon_C)$ denotes a coalgebra with coproduct $\Delta_C:C \to C\otimes C$ for which we adopt the Sweedler notation $\Delta_C(c)=c_{(1)}\otimes c_{(2)}$ and counit $\epsilon_C:C\to   \mathbb{K}$. When it is clear from the context, we drop the subscript of the maps. When we write vector space we mean $\mathbb{K}$-vector space. When we write algebra (coalgebra) we mean a unital (counital) algebra (coalgebra) over $\mathbb{K}$.

	\subsection{Comodule algebras}
	Recall that a bialgebra $H$ is the datum of $(H,m,\nu,\Delta,\epsilon)$ where $(H,m,\nu)$ is an algebra and $(H,\Delta,\epsilon)$ is a coalgebra such that coproduct and counit are algebra morphisms. A bialgebra is a Hopf algebra when there exists 
	a map $S : H \to H$, the antipode, 
	which is an anti-algebra and anti-coalgebra morphism and satisfies the condition
	\begin{equation*}
		m\circ(S\otimes\mathrm{id})\circ\Delta=\nu\circ \epsilon=m\circ(\mathrm{id}\otimes S)\circ\Delta,
	\end{equation*}
	or explicitly $S(h_{(1)})h_{(2)}=\epsilon(h)1_H=h_{(1)}S(h_{(2)})$.
	
	A \textbf{right $H$-comodule} ($V, \rho$) is a vector space $V$ with a \textbf{right coaction} of $H$, that is a 
	linear map $\rho:V\to V\otimes H$ satisfying
	\begin{equation*}
		(\rho\otimes\mathrm{id}_H)\circ\rho=(\mathrm{id}_V\otimes\Delta)\circ\rho,\quad (\mathrm{id}_V\otimes\epsilon)\circ\rho=\mathrm{id}_V .
	\end{equation*}
	We use a Sweedler-like notation $\rho(v)=v_{(0)}\otimes v_{(1)}$ for a coaction.
	
	
	In addition a \textbf{right $H$-comodule algebra} is a right comodule $(A,\rho)$, where $A$ is an algebra and the coaction $\rho$ is an algebra morphism. Here the algebra structure on $A\otimes H$ is the one given by the component-wise product.  The elements for which the coaction is trivial, 
	\begin{equation*}
		V^{coH}:=\{v\in V|\rho(v)=v\otimes 1_H\} ,
	\end{equation*}
	form the space of \textbf{coaction invariant elements}. For a right $H$-comodule algebra $A$ we have that $A^{coH}$ is a subalgebra of $A$. 
	
	Let $(V,\rho_V)$ and $(W,\rho_W)$ be right $H$-comodules, a linear map $f:V\to W$ is said to be \textbf{right $H$-colinear} or a right comodule morphism if
	\begin{equation*}
		\rho_W\circ f=(f\otimes\mathrm{id}_H)\circ\rho_V .
	\end{equation*}
	A $H$-colinear map between $H$-comodule algebras which in addition is an algebra morphism is said to be a $H$-comodule algebra morphism.
	
	Right $H$-comodules with $H$-colinear maps form a category $\mathfrak{M}^H$. The latter is monoidal meaning that the tensor product $V\otimes W$ with $V, W\in \mathfrak{M}^H$ is a right $H$-comodule. The coaction is the \textbf{diagonal} one.
	\begin{equation}
		\rho^{\otimes}:V\otimes W\longrightarrow V\otimes W\otimes H,\quad v\otimes w\longmapsto v_{(0)}\otimes w_{(0)}\otimes v_{(1)}w_{(1)}.
	\end{equation}

	\begin{expl}
		Any Hopf algebra ($H, \Delta $) is a right $H$-comodule algebra with coaction $\rho=\Delta$. In this case one has $H^{coH}=\mathbb{K}$ : from $h_{(1)}\otimes h_{(2)}=h\otimes 1_H$ 
		by applying $\epsilon \otimes \mathrm{id}_H$
		on both sides of the latter we get $h=\epsilon(h) \in \mathbb{K}$.
	\end{expl}
	
	\subsection{Hopf--Galois extensions}
	Let $(A,\rho)$ be a right $H$-comodule algebra with  $B:=A^{coH}$. The \textit{canonical map} is defined as
	\begin{equation}
		\begin{split}
			\chi:=(\mathrm{m}_A\otimes\mathrm{id}_H)\circ(\mathrm{id}_A\otimes_B\rho) : A\otimes_B A  & \longrightarrow A\otimes H\\
			a\otimes_B\tilde{a}  & \longmapsto a\tilde{a}_{(0)}\otimes \tilde{a}_{(1)} ,
		\end{split} 
	\end{equation}
	whose domain is the \textbf{balanced tensor product} $A\otimes _BA$.
	The map $\chi$ is a left $A$-linear and right $H$-colinear morphism. The inclusion $B\subseteq A$ is said to be a \textbf{$H$-Hopf--Galois extension} when the associated canonical map $\chi$ is invertible.   
	
	\begin{remark}
		\label{rem:canonical_maps_equivalence}
		One can associate a different canonical map to $B\subseteq A$, namely
		\begin{equation*}
			\chi':A\otimes_BA\longrightarrow A\otimes H,\quad a\otimes_B\tilde{a}\longmapsto a_{(0)}\tilde{a}\otimes a_{(1)}.
		\end{equation*} 
		If we assume that the antipode of $H$ is bijective then we that $\varphi:A\otimes H\to A\otimes H$, mapping $a\otimes h\to a_{(0)}\otimes S^{-1}(h)a_{(1)}$, this is bijective too and $\chi=\varphi\circ\chi'$. Thus the invertibility of $\chi$ is equivalent to the invertibility of $\chi'$. 
	\end{remark}
	In the present paper we restrict our attention to the following class of Hopf--Galois extensions \cite{hajac1996strong,schneider1990principal}.  
	\begin{defn}
		\label{defn:principal_comodule_algebra}
		Let $H$ be a Hopf algebra with bijective antipode. A $H$-Hopf--Galois extension  is called a \textbf{principal $H$-comodule algebra} when $A$ is a left faithfully flat $B$-module. 
	\end{defn}    
	
	\begin{remark}
		We recall that a (left) module $M$ over an algebra $B$ is said to be faithfully flat if any sequence  $N_1\to N_2\to N_3$ of right $B$-modules is exact if and only if the sequence $N_1\otimes_R M\to N_2\otimes_R M\to N_3\otimes_R M$ is such (the latter is a sequence of vector space). Principal comodule algebras have been characterized in \cite{schneider1990principal} and for them the surjectivity of the canonical map is equivalent to its invertibility. 
	\end{remark}
	
	Using the bijectivity of the canonical map of a $H$-Hopf--Galois extension, one defines the \textbf{translation map}, 
	\begin{equation}
		\tau:=\chi^{-1}|_{1_A\otimes H}:H\longrightarrow A\otimes_B A . 
	\end{equation}
	In the following, we use a Sweedler-like convention for this map and write
	\begin{align*}
		\tau(h)=h^{\langle1\rangle}\otimes_Bh^{\langle2\rangle},\quad h\in H .
	\end{align*}
	Thus, by definition,
	\beq\label{tuno}
	\tuno{h} \zero{\tdue{h}} \otimes \one{\tdue{h}} = 1_A \otimes h.
	\eeq
	
	The translation map enjoys a series of useful properties that we list below.
	\begin{prop}
		\label{translation_map_properties}
		\cite{Schneider1990RepresentationTO,Brzezi_ski_1996} Let $H$ be a Hopf algebra with counit $\epsilon$ and antipode $S$ and $A$ a right $H$-comodule algebra such that $B\subseteq A$ is a $H$-Hopf--Galois extension. Then the translation map $\tau$ satisfies, for any $h,k\in H$ and $a\in A$, the properties:
		\begin{align}
			\label{eq:transl_1}
			h^{\langle1\rangle}\otimes_B\tensor{h}{^{\langle2\rangle}_{(0)}}\otimes\tensor{h}{^{\langle2\rangle}_{(1)}}&=\tensor{h}{_{(1)}^{\langle1\rangle}}\otimes_B\tensor{h}{_{(1)}^{\langle2\rangle}}\otimes h_{(2)},\\
			\label{eq:transl_2}
			\tensor{h}{^{\langle1\rangle}_{(0)}}\otimes_Bh^{\langle2\rangle}\otimes\tensor{h}{^{\langle1\rangle}_{(1)}}&=\tensor{h}{_{(2)}^{\langle1\rangle}}\otimes_B\tensor{h}{_{(2)}^{\langle2\rangle}}\otimes S(h_{(1)}),\\
			\label{eq:transl_3}
			h^{\langle1\rangle}h^{\langle2\rangle}&=\epsilon(h)1_A,\\
			\label{eq:transl_4}
			(hk)^{\langle1\rangle}\otimes_B(hk)^{\langle2\rangle}&=k^{\langle1\rangle}h^{\langle1\rangle}\otimes_Bh^{\langle2\rangle}k^{\langle2\rangle},\\
			\label{eq:transl_5}
			a_{(0)}\tensor{a}{_{(1)}^{\langle1\rangle}}\otimes_B\tensor{a}{_{(1)}^{\langle2\rangle}}&=1_A\otimes_Ba.
		\end{align}
	\end{prop}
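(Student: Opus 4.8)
The unifying tool is the bijectivity of the canonical map $\chi$ together with its defining property \eqref{tuno}, which records that $\chi\circ\tau$ is the inclusion $h\mapsto 1_A\otimes h$; I will also use that $\chi$ is left $A$-linear and that the coaction $\rho$ is an algebra map. For each of the five identities the plan is the same: apply $\chi$, or $\chi\otimes\id_H$, to both sides, rewrite each side as an element of $A\otimes H$ (resp.\ $A\otimes H\otimes H$) by repeated use of \eqref{tuno}, and conclude by injectivity of $\chi$ (and of $\chi\otimes\id_H$, which is injective since $\chi$ is bijective and we work over a field).

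The identities \eqref{eq:transl_3}, \eqref{eq:transl_5} and \eqref{eq:transl_1} follow quickly. Applying $\id_A\otimes\epsilon$ to \eqref{tuno} and using the counit axiom for $\rho$ collapses the left side to $\tuno{h}\tdue{h}$ and the right side to $\epsilon(h)1_A$, giving \eqref{eq:transl_3}. For \eqref{eq:transl_5}, left $A$-linearity of $\chi$ gives $\chi(\zero{a}\,\tuno{\one{a}}\otimes_B\tdue{\one{a}})=\zero{a}\cdot\chi(\tau(\one{a}))=\zero{a}\otimes\one{a}=\chi(1_A\otimes_B a)$, and injectivity finishes. For \eqref{eq:transl_1}, applying $\chi\otimes\id_H$ to the left side and using coassociativity of $\rho$ reduces it, via \eqref{tuno}, to $1_A\otimes h_{(1)}\otimes h_{(2)}$; the right side maps to $\chi(\tau(h_{(1)}))\otimes h_{(2)}$, which equals $1_A\otimes h_{(1)}\otimes h_{(2)}$ as well.

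For the anti-multiplicativity \eqref{eq:transl_4}, I apply $\chi$ to the right-hand side $\tuno{k}\tuno{h}\otimes_B\tdue{h}\tdue{k}$; since $\rho$ is an algebra map, $\chi$ sends this to $\tuno{k}\tuno{h}\,\zero{\tdue{h}}\zero{\tdue{k}}\otimes\one{\tdue{h}}\one{\tdue{k}}$, and applying \eqref{tuno} first to the $h$-letters and then to the $k$-letters produces $1_A\otimes hk=\chi(\tau(hk))$; injectivity gives the claim.

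The genuine difficulty is \eqref{eq:transl_2}, the only relation in which the antipode occurs. Here $\chi$ is of no direct help for the coaction sitting on the \emph{left} leg of $A\otimes_B A$, because $\chi$ multiplies that leg into the product and thereby loses it, so the antipode has to be generated by hand. My plan is to apply $\chi\otimes\id_H$ and reduce \eqref{eq:transl_2}, by injectivity, to the equivalent identity in $A\otimes H\otimes H$
\[
\zero{\tuno{h}}\,\zero{\tdue{h}}\otimes\one{\tdue{h}}\otimes\one{\tuno{h}}=1_A\otimes h_{(2)}\otimes S(h_{(1)}).
\]
To establish this I would begin from \eqref{tuno} and apply the coaction $\rho$ to its $A$-leg; using that $\rho$ is an algebra map together with comodule coassociativity yields
\[
\zero{\tuno{h}}\,\zero{\tdue{h}}\otimes\one{\tuno{h}}\,\one{\tdue{h}}\otimes\two{\tdue{h}}=1_A\otimes 1_H\otimes h,
\]
where $\zero{\tdue{h}}\otimes\one{\tdue{h}}\otimes\two{\tdue{h}}$ denotes the iterated coaction. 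Applying to the last two tensor legs the map $k\otimes l\mapsto k\,S(l_{(1)})\otimes l_{(2)}$ and invoking the antipode axiom in the form $g_{(1)}S(g_{(2)})\otimes g_{(3)}=1_H\otimes g$ collapses the middle leg and leaves the desired identity up to the flip of the two $H$-factors. The careful bookkeeping of the several coaction and coproduct legs is the only real subtlety in the whole proposition.
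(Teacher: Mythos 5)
The paper itself gives no proof of this proposition---it is quoted from the literature (Schneider, Brzezi\'nski)---so there is no internal argument to compare yours against; judged on its own merits, your proof is correct. The strategy of transporting each identity through the bijection $\chi$ (or $\chi\otimes\id_H$, which is indeed injective because $\chi$ is bijective and the tensor product is over a field) and verifying it in $A\otimes H$ (resp.\ $A\otimes H\otimes H$) using only \eqref{tuno}, the left $A$-linearity of $\chi$ and the comodule-algebra axioms is the standard one, and all five reductions you give are accurate. In particular your handling of \eqref{eq:transl_2}, the only place where the antipode must be generated by hand, closes up as claimed: applying $\rho$ to the $A$-leg of \eqref{tuno} and then the map $k\otimes l\mapsto k\,S(l_{(1)})\otimes l_{(2)}$ to the last two legs yields $\zero{\tuno{h}}\,\zero{\tdue{h}}\otimes\one{\tuno{h}}\otimes\one{\tdue{h}}=1_A\otimes S(h_{(1)})\otimes h_{(2)}$, which after the flip of the two $H$-factors is exactly the identity that $\chi\otimes\id_H$ turns \eqref{eq:transl_2} into.

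One point you should make explicit in the proof of \eqref{eq:transl_4}: the right-hand side $\tuno{k}\tuno{h}\otimes_B\tdue{h}\tdue{k}$ is a priori not well defined, since substituting different representatives of $\tau(k)$ in $A\otimes A$ changes it unless $\tau(h)$ is $B$-central, i.e.\ $b\,\tuno{h}\otimes_B\tdue{h}=\tuno{h}\otimes_B\tdue{h}\,b$ for all $b\in B$. So before ``applying $\chi$ to the right-hand side'' one needs this centrality. It is not a real obstruction, and it follows in one line by your own method: $\chi$ sends both $b\,\tau(h)$ and $\tau(h)\,b$ to $b\otimes h$ (using \eqref{tuno}, left $A$-linearity of $\chi$, and $\rho(b)=b\otimes 1_H$ for $b\in B$), and injectivity concludes. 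With that remark added, the argument is complete.
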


	\begin{expl}
		\label{expl:antipode_canonical}
		A bialgebra is a Hopf algebra if and only if the extension $\mathbb{K}\subseteq H$ is $H$-Hopf--Galois. Indeed, if there is an antipode $S$ the canonical map is invertible via
		\begin{equation*}
			\chi^{-1}(h\otimes k)=hS(k_{(1)})\otimes k_{(2)} .
		\end{equation*}
		Conversely, if $H$ is a bialgebra and $\mathbb{K}\subseteq H$ is Hopf--Galois then the map
		\begin{equation*}
			S:H\longrightarrow H,\quad h\longmapsto h^{\langle1\rangle}\epsilon(h^{\langle2\rangle}),
		\end{equation*}
		is the antipode that makes $H$ a Hopf algebra.
	\end{expl}
	
	For completeness, we state and prove a lemma which we need later on.
	\begin{lem}
		\label{lem:equiv_of _coinvarinats}
		Let $B\subseteq A$ be a right $H$-Hopf--Galois extension and define
		\begin{align*}
			\mathcal{L}_A&:=\{a\otimes \tilde{a}\in A\otimes A|a\otimes \tilde{a}_{(0)}\otimes \tilde{a}_{(1)}=a_{(0)}\otimes \tilde{a}\otimes S(a_{(1)})\},\\
			(A\otimes A)^{coH}&:=\{a\otimes \tilde{a}\in A\otimes A|a_{(0)}\otimes \tilde{a}_{(0)}\otimes a_{(1)}\tilde{a}_{(1)}=a\otimes \tilde{a}\otimes 1_H\},\\
			\mathcal{C}(A,H)&:=\{a\otimes \tilde{a}\in A\otimes A|a_{(0)}\otimes\tensor{a}{_{(1)}^{\langle1\rangle}}\otimes_B\tensor{a}{_{(1)}^{\langle2\rangle}}\tilde{a}=a\otimes \tilde{a}\otimes_B1_A\} .
		\end{align*} 
		Then one has $\mathcal{L}_A=(A\otimes A)^{coH}=\mathcal{C}(A,H)$.
	\end{lem}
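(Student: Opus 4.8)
The plan is to use the middle set $(A\otimes A)^{coH}$ as a hub and to establish the two equalities $\cC(A,H)=(A\otimes A)^{coH}$ and $\cL_A=(A\otimes A)^{coH}$ separately, each by a short diagram chase with Sweedler indices. The set $(A\otimes A)^{coH}$ is the most convenient of the three, being the space of coinvariants for the diagonal coaction $\rho^{\otimes}$ on $A\otimes A$, so routing through it keeps every step mechanical and uses only the tools already available: invertibility of $\chi$, the defining property \eqref{tuno} of the translation map, coassociativity, the counit property of $\rho$, and the antipode axioms.

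For $\cC(A,H)=(A\otimes A)^{coH}$ I would exploit that the canonical map $\chi$ is invertible, hence so is the linear isomorphism $\id_A\otimes\chi:A\otimes(A\otimes_B A)\to A\otimes A\otimes H$; an equation between two elements of $A\otimes(A\otimes_B A)$ therefore holds if and only if it holds after applying $\id_A\otimes\chi$. Applying this map to the relation defining $\cC(A,H)$, the right-hand side $a\otimes\tilde{a}\otimes_B 1_A$ goes to $a\otimes\tilde{a}\otimes 1_H$, while on the left-hand side $\zero{a}\otimes\tuno{\one{a}}\otimes_B\tdue{\one{a}}\tilde{a}$ the formula $\chi(x\otimes_B y)=x\zero{y}\otimes\one{y}$ together with the fact that $\rho$ is an algebra map turns the last two legs into $\tuno{\one{a}}\,\zero{\tdue{\one{a}}}\,\zero{\tilde{a}}\otimes\one{\tdue{\one{a}}}\,\one{\tilde{a}}$. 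Property \eqref{tuno} then collapses $\tuno{\one{a}}\,\zero{\tdue{\one{a}}}\otimes\one{\tdue{\one{a}}}$ to $1_A\otimes\one{a}$, leaving exactly $\zero{a}\otimes\zero{\tilde{a}}\otimes\one{a}\one{\tilde{a}}$. Thus $\id_A\otimes\chi$ carries the defining relation of $\cC(A,H)$ precisely to that of $(A\otimes A)^{coH}$, and bijectivity gives the equality of the two sets.

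For $\cL_A=(A\otimes A)^{coH}$ I would argue by two inclusions using coassociativity and the antipode axiom $S(\one{a})\two{a}=\epsilon(\one{a})1_H=\one{a}S(\two{a})$. Starting from a member of $(A\otimes A)^{coH}$, I apply $\rho$ once more to the $a$-leg, split $\one{a}$ into $\one{a}\otimes\two{a}$ by coassociativity, then act by $S$ on the first $H$-leg and multiply it into the second; the identity $S(\one{a})\two{a}=\epsilon(\one{a})1_H$ and the counit property of $\rho$ then yield precisely the relation defining $\cL_A$. The reverse inclusion is symmetric: starting from the $\cL_A$ relation I apply $\rho$ to the $a$-leg, use coassociativity, multiply the two resulting $H$-legs as $\one{a}S(\two{a})$, and again invoke the antipode axiom to recover the diagonal-coinvariance relation.

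The step I expect to require the most care is the index bookkeeping in the passage $\cC(A,H)=(A\otimes A)^{coH}$: one must verify that the summations coming from the translation map $\tau(\one{a})$ and from the coaction on $\tilde{a}$ are genuinely independent, so that \eqref{tuno} may be applied to the translation part alone while the $\tilde{a}$-legs are carried along untouched, and that every manipulation is well defined over the balanced tensor product $\otimes_B$. Once this independence is made explicit the three computations are routine, and no ingredient beyond those listed above is needed.
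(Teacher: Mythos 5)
Your proposal is correct and follows essentially the same route as the paper: $\mathcal{L}_A=(A\otimes A)^{coH}$ via the antipode axioms, coassociativity and the counit property, and $\mathcal{C}(A,H)=(A\otimes A)^{coH}$ via bijectivity of the canonical map together with \eqref{tuno}. The only cosmetic difference is that you get the second equality from a single computation plus injectivity of $\mathrm{id}_A\otimes\chi$, whereas the paper runs the two inclusions separately, applying $\mathrm{id}_A\otimes\chi$ one way and $\mathrm{id}_A\otimes\chi^{-1}$ (with the translation-map properties \eqref{eq:transl_4} and \eqref{eq:transl_5}) the other way.
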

	\begin{proof}
		We first prove the inclusion $\mathcal{L}_A\subseteq(A\otimes A)^{coH}$. By taking $a\otimes \tilde{a}\in\mathcal{L}_A$ and applying on both sides of the defining equation the map $\rho\otimes\mathrm{id}_H$ we get
		\begin{equation*}
			a_{(0)}\otimes a_{(1)}\otimes \tilde{a}_{(0)}\otimes \tilde{a}_{(1)}=a_{(0)}\otimes a_{(1)}\otimes \tilde{a}\otimes S(a_{(2)}) .
		\end{equation*}
		By applying again on both sides the map $(\mathrm{id}_{A\otimes A}\otimes\mathrm{m}_H)\circ(\mathrm{id}_{A}\otimes\mathrm{flip}_{HA}\otimes\mathrm{id}_H)$:
		\begin{equation*}
			a_{(0)}\otimes \tilde{a}_{(0)}\otimes a_{(1)}\tilde{a}_{(1)}=a_{(0)}\otimes \tilde{a}\otimes a_{(1)}S(a_{(2)})=a\otimes \tilde{a}\otimes 1_H,
		\end{equation*}
		where we used $a_{(1)}S(a_{(2)})=\epsilon(a_{(1)})1_H$ and $a_{(0)}\epsilon(a_{(1)})=a$.
		This proves the inclusion. We next prove that $(A\otimes A)^{coH}\subseteq\mathcal{L}_A$. With $a\otimes \tilde{a}\in (A\otimes A)^{coH}$ by applying $(\mathrm{id}_A\otimes S\circ\rho)\otimes\mathrm{id}_{A\otimes H}$ on both sides of the defying equation we get
		\begin{equation*}
			a_{(0)}\otimes S(a_{(1)})\otimes \tilde{a}_{(0)}\otimes a_{(2)}\tilde{a}_{(1)}=a_{(0)}\otimes S(a_{(1)})\otimes \tilde{a}\otimes 1_H .
		\end{equation*}
		By applying again $(\mathrm{id}_{A\otimes A}\otimes\mathrm{m}_H)\circ(\mathrm{id}_{A}\otimes\mathrm{flip}_{HA}\otimes\mathrm{id}_H)$ we have the equation
		\begin{equation*}
			a_{(0)}\otimes \tilde{a}_{(0)}\otimes S(a_{(1)})a_{(2)}\tilde{a}_{(1)}=a_{(0)}\otimes \tilde{a}\otimes S(a_{(1)}).
		\end{equation*}
		The left hand side of the latter reduces to $a\otimes \tilde{a}_{(0)}\otimes \tilde{a}_{(1)}$ once we use $a_{(1)}S(a_{(2)})=\epsilon(a_{(1)})1_H$ and $a_{(0)}\epsilon(a_{(1)})=a$. We conclude that $(A\otimes A)^{coH}=\mathcal{L}_A$.
		
		To prove the other equivalence we use bijectivity of the canonical map $\chi$. Take $a\otimes \tilde{a}\in \mathcal{C}(A,H)$ and apply $\mathrm{id}_A\otimes\chi$ on both sides of the defying equation to get
		\begin{equation*}
			a_{(0)}\otimes\tensor{a}{_{(1)}^{\langle1\rangle}}\tensor{a}{_{(1)}^{\langle2\rangle}_{(0)}}\tilde{a}_{(0)}\otimes\tensor{a}{_{(1)}^{\langle2\rangle}_{(1)}}\tilde{a}_{(1)}=a\otimes \tilde{a}\otimes1_H .
		\end{equation*}
		Using \eqref{tuno},	
		the expression on the left hand side reduces to $a_{(0)}\otimes \tilde{a}_{(0)}\otimes a_{(1)}\tilde{a}_{(1)}$; thus $\mathcal{C}(A,H)\subseteq(A\otimes A)^{coH}$. Conversely, take $a\otimes \tilde{a}\in(A\otimes A)^{coH}$ and apply $\mathrm{id}_A\otimes\chi^{-1}$ to both sides to the coinvariance equation. The left $A$-linearity of $\chi^{-1}$ yields
		\begin{equation*}
			a_{(0)}\otimes \tilde{a}_{(0)}(a_{(1)}\tilde{a}_{(1)})^{\langle1\rangle}\otimes_B(a_{(1)}\tilde{a}_{(1)})^{\langle2\rangle}=a\otimes \tilde{a}\otimes_B1_A  .
		\end{equation*}
		Using eqs. \eqref{eq:transl_4} and \eqref{eq:transl_5}, 
		the left hand side can be rewritten as $a_{(0)}\otimes\tau(a_{(1)})\tilde{a}$, showing the converse inclusion. So we have $(A\otimes A)^{coH}=\mathcal{C}(A,H)$.
	\end{proof}
	
	\begin{remark}
		Since in the first equivalence we do not use that $A$ is an algebra nor that $B\subseteq A$ is a Hopf--Galois extension, the equality $\mathcal{L}_V=(V\otimes V)^{coH}$ holds for any right $H$-comodule $V$.
	\end{remark}

	\section{Weak and full Hopf algebroids}
	In this section, we review the general theory of bialgebroids and Hopf algebroids in the sense of both Schauenburg \cite{schau_dd} and B\"ohm--Szlachanyi \cite{bohm2004hopf}. Then we focus our attention on the Ehresmann--Schauenburg bialgebroid associated to a Hopf--Galois extension and finally we study the concept of a twist of an antipode.
	
	\subsection{Bialgebroids, canonical maps and antipodes}
	As their name suggests, bialgebroids are a generalization of bialgebras. The latter are algebras over a field that are equipped with a compatible coalgebra structure. Roughly speaking, a bialgebroid is a bialgebra over a (possibly noncommutative) algebra rather than over a field. We introduce bialgebroids in the same spirit of bialgebras, that is by giving the definition of an algebra, a coalgebra and then the compatibility conditions.
	
	In this setting, the algebra role is played by:
	\begin{defn}
		\label{defn:ring}
		Let $B$ be an algebra and denote by $B^{e}:=B\otimes B^{op}$ its enveloping algebra. A \textbf{$B^{e}$-ring} is the datum of $(U,t,s)$ where
		\begin{itemize}
			\item $U$ is an algebra;
			\item $s:B\to U$ is an algebra morphism, $t:B^{op}\to U$ is an anti-algebra morphism;
			\item the ranges of $s$ and of $t$ commute in $U$: $s(b)t(b)=t(b)s(b)$ for all $b\in B$.
		\end{itemize}
		We refer to $s$ and $t$ as \textit{source} and \textit{target} map respectively.
	\end{defn}  
	
	A $B$-bimodule structure is then defined on the algebra $U$ by 
	\begin{equation}
		\label{eq:bimod_strc_ring}
		bub':=s(b)t(b')u,\quad u\in U,\quad b,b'\in B .
	\end{equation}
	A balanced tensor product over the algebra $B$ is then obtained as the quotient of $U\otimes U$ by the ideal generated by elements of the form $t(b)u\otimes u'-u\otimes s(b)u'$ with $u,u'\in U$ and $b\in B$: 
	\begin{equation}
		\label{eq:tensor_B}
		U\otimes_BU:=U\otimes U/(t(b)u\otimes u'-u\otimes s(b)u') .
	\end{equation}
	With the source and target maps we can endow $U$ also with a $B^{op}$-bimodule structure.
	If we denote by $^{op}:B\to B^{op}$ the linear anti-multiplicative isomorphism, then we have for any $u\in U$ and $b,b'\in B$ 
	\begin{equation}
		\label{eq:B_opp_bimod1}
		b^{op}\cdot u\cdot b'^{op}:=t(b)ut(b'),
	\end{equation} 
	\begin{equation}
		\label{eq:B_opp_bimod2}
		b^{op}*u*b'^{op}:=s(b')us(b).
	\end{equation}
	These are used to get different balanced tensor products. That is we define
	\begin{align}
		\label{eq:tensor_prod_B_opp1}
		U\odot_{B^{op}}U:=U\otimes U/(ut(b)\otimes u'-u\otimes t(b)u'),\\
		\label{eq:tensor_prod_B_opp2}
		U\circledast_{B^{op}}U:=U\otimes U/(s(b)u\otimes u'-u\otimes u's(b)) .
	\end{align}
	
	The notion of coalgebra is extended by the following. 
	\begin{defn}
		\label{defn:coring}
		Let $B$ be an algebra, a \textbf{$B$-coring} is the datum of $(\mathcal{C},\underline{\Delta},\underline{\epsilon})$ where
		\begin{itemize}
			\item $\mathcal{C}$ is a $B$-bimodule;
			\item $\underline{\Delta}:\mathcal{C}\longrightarrow \mathcal{C}\otimes_B \mathcal{C}$ is a $B$-bimodule morphism satisfying
			\begin{equation}
				(\underline{\Delta}\otimes_B\mathrm{id}_{\mathcal{C}})\circ\underline{\Delta}=(\mathrm{id}_{\mathcal{C}}\otimes_B\underline{\Delta})\circ\underline{\Delta};
			\end{equation}
			\item $\underline{\epsilon}$ is a $B$-bimodule morphism with the property
			\begin{equation}
				\label{eq:counital_coring}
				(\underline{\epsilon}\otimes_B\mathrm{id}_{\mathcal{C}})\circ\underline{\Delta}=\mathrm{id}_{\mathcal{C}}=(\mathrm{id}_{\mathcal{C}}\otimes_B\underline{\epsilon})\circ\underline{\Delta} \, .
			\end{equation}
		\end{itemize}
		We call $\underline{\Delta}$ the \textit{coproduct} and $\underline{\epsilon}$ the \textit{counit} as in the case of bialgebras.
	\end{defn}
	
	Then these two structures are put together. 
	\begin{defn}
		\label{defn:bialgebroid}
		Let $B$ be an algebra, the datum of 
		$(\mathcal{H},s,t, \underline{\Delta},\underline{\epsilon})$ is a (left) \textbf{$B$-bialgebroid} when:
		\begin{itemize}
			\item The triple $(\mathcal{H},s,t)$ is a $B^{e}$-ring;
			\item[~]
			\item The triple $(\mathcal{H},\underline{\Delta},\underline{\epsilon})$ is a $B$-coring 
			for the $B$-bimodule structure of eq.~\eqref{eq:bimod_strc_ring};
			\item[~]
			\item The coproduct $\underline{\Delta}$ is an algebra morphism when corestricted to
			\begin{equation}
				\label{eq:Takeuchi_prod}
				\mathcal{H}\times_B\mathcal{H}:=\{h\otimes_Bh'\in\mathcal{H}\otimes_B\mathcal{H} \, | \,  ht(b)\otimes_Bh'=h\otimes_Bh's(b),\forall b\in B\} ; 
			\end{equation}
			\item The counit is unital $\underline{\epsilon}(1_{\mathcal{H}})=1_A$ and satisfies
			\begin{equation}
				\label{eq:counit_product}
				\underline{\epsilon}(hh')=\underline{\epsilon}(hs(\underline{\epsilon}(h')))=\underline{\epsilon}(ht(\underline{\epsilon}(h'))),\quad \forall h,h'\in\mathcal{H} \, .
			\end{equation}
		\end{itemize}
	\end{defn}
	\noindent
	In the above eq.~\eqref{eq:Takeuchi_prod}, we have the Takeuchi product \cite{tak_gpalg} that is an algebra with component-wise multiplication and unit $1_{\mathcal{H}}\otimes 1_{\mathcal{H}}$. This condition is important since in general 
	$\mathcal{H}\otimes_B\mathcal{H}$ has no algebra structure, 
	unless $\mathcal{H}$ is a symmetric $B$-bimodule. 
	
	We use the Sweedler summation notation for the coproduct $\underline{\Delta}(h)=h_{(1)}\otimes_B h_{(2)}$ and by recalling the $B$-bimodule structure \eqref{eq:bimod_strc_ring} one has for 
	eq.~\eqref{eq:counital_coring} 
	\begin{equation}
		\label{eq:counital_bialgebroid}
		\underline{\epsilon}(h_{(1)}) h_{(2)}=s(\underline{\epsilon}(h_{(1)}))h_{(2)}=h=t(\underline{\epsilon}(h_{(2)}))h_{(1)}=h_{(1)} \underline{\epsilon}(h_{(2)}) .
	\end{equation}
	
	For a left bialgebroid $\mathcal{H}$ over an algebra $B$, the category of left $\mathcal{H}$-module $_{\mathcal{H}}\mathfrak{M}$ is a monoidal category with respect to the tensor product $\otimes_B$. In contrast, the category of right $\mathcal{H}$-module $\mathfrak{M}_{\mathcal{H}}$ is not monoidal in general. Using a left-right symmetry argument one can define a right bialgebroid over $B$ \cite{brz-wis}. Given a left (right) bialgebroid it is possible that a right (left) bialgebroid structure cannot be defined. We return to this point later in the text.  
	
	We saw in example \ref{expl:antipode_canonical} that the existence of the antipode for a bialgebra $H$ is equivalent to the Hopf--Galois condition for $\mathbb{K}\subseteq H$.  This motivates the following.
	\begin{defn}
		\label{defn:Hopf_algebroid}
		A $B$-bialgebroid $\mathcal{H}$ is a (left) \textbf{weak Hopf algebroid over $B$} if the (canonical) map
		\begin{equation}
			\label{eq:canonical_bialgeb}
			\beta:\mathcal{H}\odot_{B^{op}}\mathcal{H}\longrightarrow\mathcal{H}\otimes_B\mathcal{H},\quad h\odot_{B^{op}}h'\longmapsto h_{(1)}\otimes_B h_{(2)}h' \, ,
		\end{equation}
		is bijective. Notice that the map is well-defined over the tensor product \eqref{eq:tensor_prod_B_opp1} since $\underline{\Delta}(t(b))=1\otimes_B t(b)$ for every $b\in B$.  
	\end{defn} 
	
	This condition generalizes the (left version of)  example \ref{expl:antipode_canonical}, that is if $B=\mathbb{K}$ we retrieve the Hopf algebra case. This definition was originally introduced in \cite{schau_dd} where the invertibility of $\beta$ is proved to be equivalent to the inner-hom functor preserving property of the forgetful functor 
	$_{\mathcal{H}}\mathfrak{M} \to{}_{B}\mathfrak{M}_{B}$ from the category of left $\mathcal{H}$-modules to the one of $B$-bimodules.  
	
	ln parallel with Remark \ref{rem:canonical_maps_equivalence}, 
	there is another canonical map associated to a bialgebroid, that is, 
	\begin{equation}
		\label{eq:canonical_bialgeb2}
		\lambda:\mathcal{H}\circledast_{B^{op}}\mathcal{H}\longrightarrow\mathcal{H}\otimes_B\mathcal{H},\quad h\circledast_{B^{op}}h'\longmapsto h'_{(1)}h\otimes_Bh'_{(2)},
	\end{equation}
	using the tensor product \eqref{eq:tensor_prod_B_opp2}, which is well-defined due to $\underline{\Delta}(s(b))=s(b)\otimes_B1_{\mathcal{H}}$.
	
	An alternative way to define a Hopf algebroid was proposed in \cite{bohm2019alternative,bohm2004hopf}, closer in spirit to the Hopf algebra case.
	\begin{defn}
		Let $(\mathcal{H}, s,t, \underline{\Delta},\underline{\epsilon})$ be a bialgebroid over an algebra $B$. An \textbf{antipode} for it is the datum of a bijective anti-algebra morphism $\underline{S}:\mathcal{H}\to\mathcal{H}$, with inverse $\underline{S}^{-1}:\mathcal{H}\to\mathcal{H}$, such that,
		\begin{align}
			\label{eq:antipode_1}
			\underline{S}\circ t=s
		\end{align} and, for all $h\in\mathcal{H}$,
		\begin{align}
			\label{eq:antipode_2}
			\underline{S}(h_{(1)})_{(1')}h_{(2)}\otimes_B \underline{S}&(h_{(1)})_{(2')}=1_{\mathcal{H}}\otimes_B \underline{S}(h),\\
			\label{eq:antipode_3}
			\underline{S}^{-1}(h_{(2)})_{(1')}\otimes_B\underline{S}^{-1}&(h_{(2)})_{(2')}h_{(1)}=\underline{S}^{-1}(h)\otimes_B 1_{\mathcal{H}}. 
		\end{align}
		
		\noindent
		A $B$-bialgebroid equipped with an antipode is called a \textbf{full Hopf algebroid} over $B$ and here it is denoted by $(\mathcal{H},\underline{S})$.
	\end{defn} 
	
	\begin{remark}
		One notices that eq.\eqref{eq:antipode_1} implies that for the inverse it holds that
		\begin{equation}
			\underline{S}^{-1}\circ s=t .
		\end{equation}
		Moreover eqs.\eqref{eq:antipode_2} and \eqref{eq:antipode_3} imply that the following equations hold for all $h\in\mathcal{H}$
		\begin{align}
			\label{eq:antipode_5}
			\underline{S}(h_{(1)})h_{(2)}&=(t\circ\underline{\epsilon}\circ\underline{S})(h),\\
			\label{eq:antipode_6}
			\underline{S}^{-1}(h_{(2)})h_{(1)}&=(s\circ\underline{\epsilon}\circ\underline{S}^{-1})(h).
		\end{align}
	\end{remark}
	
	We call $(\mathcal{H},\underline{S})$ a full algebroid to distinguish it from Definition~\ref{defn:Hopf_algebroid} and the 
	names are justified by the following result 
	\cite{han2023hopf}.
	\begin{prop}
		\label{prop:full_is_Hopf}
		Let $(\mathcal{H},\underline{S})$ be a full Hopf algebroid over $B$. Then $\mathcal{H}$ is a Hopf algebroid in the sense of Definition~\ref{defn:Hopf_algebroid}.
	\end{prop}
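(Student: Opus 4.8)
The plan is to prove that the canonical map $\beta$ of \eqref{eq:canonical_bialgeb} is bijective by exhibiting an explicit two-sided inverse built from the antipode, and to arrange matters so that each of the two compositions being the identity is governed by exactly one of the two antipode axioms \eqref{eq:antipode_2} and \eqref{eq:antipode_3}. Two preliminary observations organise the argument. First, $\beta$ is right $\mathcal{H}$-linear for the action on the last tensor leg, $\beta(h\odot_{B^{op}}h'x)=\beta(h\odot_{B^{op}}h')\,x$ for $x\in\mathcal{H}$, which is immediate from its definition. Second, I would rewrite the antipode axioms as statements about $\beta$: recognising the left-hand side of \eqref{eq:antipode_3} as $\beta$ applied to $\underline{S}^{-1}(h_{(2)})\odot_{B^{op}}h_{(1)}$, this axiom reads
\[
\beta\bigl(\underline{S}^{-1}(h_{(2)})\odot_{B^{op}}h_{(1)}\bigr)=\underline{S}^{-1}(h)\otimes_B 1_{\mathcal{H}},
\]
while \eqref{eq:antipode_2} reads $\underline{S}(h_{(1)})_{(1)}h_{(2)}\otimes_B\underline{S}(h_{(1)})_{(2)}=1_{\mathcal{H}}\otimes_B\underline{S}(h)$ as an identity in $\mathcal{H}\otimes_B\mathcal{H}$.

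Guided by these, I would define the candidate inverse
\[
\gamma:\mathcal{H}\otimes_B\mathcal{H}\longrightarrow \mathcal{H}\odot_{B^{op}}\mathcal{H},\qquad g\otimes_B g'\longmapsto \underline{S}^{-1}(\underline{S}(g)_{(2)})\odot_{B^{op}}\underline{S}(g)_{(1)}\,g'.
\]
The first task is to check that $\gamma$ is well defined. Using that $\underline{S}$ is an anti-algebra map with $\underline{S}\circ t=s$, that $\underline{\Delta}$ is multiplicative with $\underline{\Delta}(s(b))=s(b)\otimes_B1_{\mathcal{H}}$, and the relation $\underline{S}^{-1}\circ s=t$ together with the defining relation $ut(b)\odot_{B^{op}}u'=u\odot_{B^{op}}t(b)u'$ of \eqref{eq:tensor_prod_B_opp1}, one verifies that $\gamma$ respects both the relation $t(b)g\otimes_Bg'=g\otimes_Bs(b)g'$ coming from \eqref{eq:tensor_B} and the ambiguity in the Sweedler presentation of $\underline{\Delta}(\underline{S}(g))$, and that its values indeed lie in $\mathcal{H}\odot_{B^{op}}\mathcal{H}$.

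With $\gamma$ in hand, the identity $\beta\circ\gamma=\id$ is essentially the rewritten \eqref{eq:antipode_3}: taking $h=\underline{S}(g)$ there gives $\beta(\underline{S}^{-1}(\underline{S}(g)_{(2)})\odot_{B^{op}}\underline{S}(g)_{(1)})=g\otimes_B 1_{\mathcal{H}}$, and applying right $\mathcal{H}$-linearity to reinstate the factor $g'$ yields $\beta(\gamma(g\otimes_B g'))=g\otimes_B g'$. For the opposite identity $\gamma\circ\beta=\id$, I would compute
\[
\gamma(\beta(h\odot_{B^{op}}h'))=\gamma(h_{(1)}\otimes_B h_{(2)}h')=\underline{S}^{-1}(\underline{S}(h_{(1)})_{(2)})\odot_{B^{op}}\underline{S}(h_{(1)})_{(1)}h_{(2)}\,h',
\]
and recognise the combination $\underline{S}(h_{(1)})_{(1)}h_{(2)}\otimes_B\underline{S}(h_{(1)})_{(2)}$ as the left-hand side of \eqref{eq:antipode_2}. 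Replacing it by $1_{\mathcal{H}}\otimes_B\underline{S}(h)$ through the auxiliary map $X\otimes_B Y\mapsto \underline{S}^{-1}(Y)\odot_{B^{op}}X\,h'$, which is well defined by the same bookkeeping as for $\gamma$, produces $\underline{S}^{-1}(\underline{S}(h))\odot_{B^{op}}h'=h\odot_{B^{op}}h'$, as wanted.

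The genuinely delicate point, and the one I expect to be the main obstacle, is not any single algebraic manipulation but the well-definedness bookkeeping over the different balanced tensor products $\otimes_B$ and $\odot_{B^{op}}$: unlike the Hopf-algebra case one cannot multiply freely across $\otimes_B$, so every simplification must relocate the source and target maps across the correct tensor relation. It is precisely here that the swap relations $\underline{S}\circ t=s$ and $\underline{S}^{-1}\circ s=t$, together with the comultiplicativity identities $\underline{\Delta}(s(b))=s(b)\otimes_B1_{\mathcal{H}}$ and $\underline{\Delta}(t(b))=1_{\mathcal{H}}\otimes_B t(b)$, are indispensable, and keeping track of which of $s,t$ occurs is the crux of the verification. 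Once these checks are in place, $\beta$ has a two-sided inverse and is therefore bijective, so $\mathcal{H}$ is a Hopf algebroid in the sense of Definition~\ref{defn:Hopf_algebroid}.
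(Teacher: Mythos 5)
Your proposal is correct and follows essentially the same route as the paper: your candidate inverse $\gamma$ is literally the paper's map $\widetilde{\beta}(h\otimes_B h')=\underline{S}^{-1}(\underline{S}(h)_{(2)})\odot_{B^{op}}\underline{S}(h)_{(1)}h'$, with the same well-definedness checks (via $\underline{S}\circ t=s$, $\underline{S}^{-1}\circ s=t$ and the $\odot_{B^{op}}$ relation) and the same assignment of axioms, \eqref{eq:antipode_2} governing $\gamma\circ\beta=\id$ and \eqref{eq:antipode_3} governing $\beta\circ\gamma=\id$. Your explicit packaging of the argument through the right $\mathcal{H}$-linearity of $\beta$ in the last tensor leg is a minor presentational difference, not a different proof.
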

	\begin{proof}
		We have to show that the canonical map $\beta$ in Definition~\ref{defn:Hopf_algebroid} is linearly bijective. Let us define \begin{equation*}
			\widetilde{\beta}:\mathcal{H}\otimes_B\mathcal{H}\longrightarrow\mathcal{H}\odot_{B^{op}}\mathcal{H},\quad h\otimes_B h'\longmapsto \underline{S}^{-1}(\underline{S}(h)_{(2)})\odot_{B^{op}}\underline{S}(h)_{(1)}h' .
		\end{equation*}
		We claim that $\widetilde{\beta}=\beta^{-1}$.
		We first check that the expression of $\widetilde{\beta}$ agrees with the coproduct. If $b\in B$ and $h,h'\in\mathcal{H}$, then
		\begin{align*}
			\underline{S}^{-1}(s(b)\underline{S}(h)_{(2)})\odot_{B^{op}}\underline{S}(h)_{(1)}h'&=\underline{S}^{-1}(\underline{S}(h)_{(2)})t(b)\odot_{B^{op}}\underline{S}(h)_{(1)}h'\\
			&=\underline{S}^{-1}(\underline{S}(h)_{(2)})\odot_{B^{op}}t(b)\underline{S}(h)_{(1)}h'.
		\end{align*}
		This follows from the anti-algebra property of $\underline{S}$ and the definition of $\odot_{B^{op}}$ of \eqref{eq:tensor_prod_B_opp1}.
		Moreover, we have to check that it is well-defined on $\otimes_B$ $\mathcal{H}\odot_{B^{op}}\mathcal{H}$
		\begin{align*}
			\widetilde{\beta}(t(b)h\otimes_Bh')&=\underline{S}^{-1}(\underline{S}(t(b)h)_{(2)})\odot_{B^{op}}\underline{S}(t(b)h)_{(1)}h'\\
			&=\underline{S}^{-1}((\underline{S}(h)s(b))_{(2)})\odot_{B^{op}}(\underline{S}(h)s(b))_{(1)}h'\\
			&=\underline{S}^{-1}(\underline{S}(h)_{(2)})\odot_{B^{op}}\underline{S}(h)_{(1)}s(b)h'=\widetilde{\beta}(h\otimes_Bs(b)h')
		\end{align*}
		for all $b\in B$ and $h,h'\in\mathcal{H}$, where we used the antialgebra property of $\underline{S}$, the property \eqref{eq:antipode_1} and that 
		$\underline{\Delta}(s(b))=s(b)\otimes_B1_{\mathcal{H}}$.
		Then,
		\begin{align*}
			(\widetilde{\beta}\circ\beta)(h\odot_{B^{op}}h')&=\underline{S}^{-1}(\underline{S}(h_{(1)})_{(2')})\otimes_B\underline{S}(h_{(1)})_{(1')}h_{(2)}h'\\
			&=\underline{S}^{-1}(\underline{S}(h))\otimes_Bh'=h\otimes_Bh' , 
		\end{align*}
		\begin{align*}
			(\beta\circ\widetilde{\beta})(h\otimes_Bh')&=\underline{S}^{-1}(\underline{S}(h)_{(2)})_{(1')}\odot_{B^{op}}\underline{S}^{-1}(\underline{S}(h)_{(2)})_{(2')}\underline{S}(h)_{(1)}h'\\
			&=\underline{S}^{-1}(\underline{S}(h))\odot_{B^{op}}h'=h\odot_{B^{op}}h'
		\end{align*}
		for all $h,h'\in\mathcal{H}$, where we have used \eqref{eq:antipode_2} and \eqref{eq:antipode_3} respectively.
	\end{proof}
	
	\begin{remark}
		One can show that if an invertible antipode exists then the map \eqref{eq:canonical_bialgeb2} is bijiective too, with its inverse given by
		\begin{equation}
			\label{eq:invertible_can2}
			\lambda^{-1}(h\otimes_Bh')=\underline{S}^{-1}(h')_{(2)}h\circledast_{B^{op}}\underline{S}(\underline{S}^{-1}(h')_{(1)}) .
		\end{equation}
		Furthermore the simultaneous bijectivity of $\beta$ and $\lambda$ is equivalent to the existence of an invertible antipode for a left bialgebroid with a compatible right bialgebroid structure \cite{bohm2004hopf}. As mentioned before this is not always the case. 
		Indeed there are situations where an antipode does not exist at all. We refer to \cite{rovi2014hopf} for one such example. 
	\end{remark}
	
	Having an invertible antipode one can give a $B^{op}$-bimodule structure to $\mathcal{H}$ that differs from the ones in \eqref{eq:B_opp_bimod1} and \eqref{eq:B_opp_bimod2}. This is given by
	\begin{equation}
		b^{op}
		hb'^{op}=h\underline{S}^{-1}(t(b))t(b') 
	\end{equation}
	with associated balanced tensor product over $B^{op}$:
	\begin{equation}
		\mathcal{H}\otimes_{B^{op}}\mathcal{H}:=\mathcal{H}\otimes\mathcal{H}/(ht(b)\otimes h'-h\otimes \underline{S}^{-1}(t(b))h').
	\end{equation}
	
	\begin{lem}\label{lem:right_coprod}
		Let $(\mathcal{H},\underline{S})$ be a full Hopf algebroid over $B$, and consider the map from $\mathcal{H}$ to $\mathcal{H}\otimes_{B^{op}}\mathcal{H}$ given by the formula
		\begin{equation}
			\label{eq:right_coprod}
			h \longmapsto h^{[1]}\otimes_{B^{op}}h^{[2]}:=\underline{S}(\underline{S}^{-1}(h)_{(2)})\otimes_{B^{op}}\underline{S}(\underline{S}^{-1}(h)_{(1)}) .
		\end{equation}
		This has the following properties:
		\begin{align}
			\label{eq:right_coprod_prop1}
			h^{[1]}\underline{S}(h^{[2]})&=s(\underline{\epsilon}(h)),\\
			\label{eq:right_coprod_prop2}
			\lambda^{-1}(h\otimes_Bh')&=\underline{S}^{-1}(h'^{[1]})h\circledast_{B^{op}}h'^{[2]},\\
			\label{eq:right_coprod_prop3}
			h^{[1]}\otimes_{B^{op}}\tensor{h}{^{[2]}_{(1)}}\otimes_B\tensor{h}{^{[2]}_{(2)}}&=\tensor{h}{_{(1)}^{[1]}}\otimes_{B^{op}}\tensor{h}{_{(1)}^{[2]}}\otimes_Bh_{(2)},\\
			\label{eq:right_coprod_prop4}
			\underline{S}(h)^{[1]}\otimes_{B^{op}}\underline{S}(h)^{[2]}&=\underline{S}(h_{(2)})\otimes_{B^{op}}\underline{S}(h_{(1)}).
		\end{align}
		for all $h,h'\in\mathcal{H}$. Here in \eqref{eq:right_coprod_prop2} we have the inverse of the canonical map \eqref{eq:canonical_bialgeb2}.
	\end{lem}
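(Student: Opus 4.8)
The plan is to treat the four identities in order of increasing difficulty, exploiting that the right coproduct \eqref{eq:right_coprod} is, up to a flip, the conjugate of $\underline{\Delta}$ by $\underline{S}$. Throughout I write $g:=\underline{S}^{-1}(h)$, so that by definition $h^{[1]}\otimes_{B^{op}}h^{[2]}=\underline{S}(g_{(2)})\otimes_{B^{op}}\underline{S}(g_{(1)})$, and I freely use that $\underline{S}$ and $\underline{S}^{-1}$ are anti-algebra isomorphisms together with \eqref{eq:antipode_1} and its consequence $\underline{S}^{-1}\circ s=t$ to check well-definedness over the various balanced tensor products.

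First I would dispose of \eqref{eq:right_coprod_prop4} and \eqref{eq:right_coprod_prop2}, which are immediate from the definition. For \eqref{eq:right_coprod_prop4} one substitutes $h\mapsto\underline{S}(h)$ in \eqref{eq:right_coprod} and uses $\underline{S}^{-1}\circ\underline{S}=\id$, so that $\underline{S}^{-1}(\underline{S}(h))_{(2)}=h_{(2)}$ and $\underline{S}^{-1}(\underline{S}(h))_{(1)}=h_{(1)}$, giving exactly $\underline{S}(h_{(2)})\otimes_{B^{op}}\underline{S}(h_{(1)})$. For \eqref{eq:right_coprod_prop2} one simply rewrites the formula \eqref{eq:invertible_can2} for $\lambda^{-1}$ of the preceding Remark: since $\underline{S}^{-1}(h'^{[1]})=\underline{S}^{-1}\big(\underline{S}(\underline{S}^{-1}(h')_{(2)})\big)=\underline{S}^{-1}(h')_{(2)}$ and $h'^{[2]}=\underline{S}(\underline{S}^{-1}(h')_{(1)})$, the right-hand side of \eqref{eq:right_coprod_prop2} coincides termwise with \eqref{eq:invertible_can2}.

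Next I would establish \eqref{eq:right_coprod_prop1}. Using anti-multiplicativity of $\underline{S}$ one has
\[
h^{[1]}\underline{S}(h^{[2]})=\underline{S}(g_{(2)})\,\underline{S}(\underline{S}(g_{(1)}))=\underline{S}\big(\underline{S}(g_{(1)})\,g_{(2)}\big).
\]
Applying \eqref{eq:antipode_5} to $g$ gives $\underline{S}(g_{(1)})g_{(2)}=(t\circ\underline{\epsilon}\circ\underline{S})(g)$, and then \eqref{eq:antipode_1} turns $\underline{S}\circ t$ into $s$, so that the expression collapses to $s(\underline{\epsilon}(\underline{S}(g)))=s(\underline{\epsilon}(h))$, which is \eqref{eq:right_coprod_prop1}.

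The main obstacle is the mixed coassociativity \eqref{eq:right_coprod_prop3}, which asserts that the right coproduct and $\underline{\Delta}$ intertwine. Using \eqref{eq:right_coprod_prop4} to rewrite the left-hand side of \eqref{eq:right_coprod_prop3} with $h=\underline{S}(g)$, the claim reduces to
\[
\underline{S}(g_{(2)})\otimes_{B^{op}}\underline{\Delta}\big(\underline{S}(g_{(1)})\big)=\big(h_{(1)}^{[1]}\otimes_{B^{op}}h_{(1)}^{[2]}\big)\otimes_B h_{(2)},
\]
so the essential point is to control the \emph{left} coproduct of an $\underline{S}$-image. This is precisely what the antipode axiom \eqref{eq:antipode_2} supplies, since it computes $\underline{S}(g_{(1)})_{(1')}\otimes_B\underline{S}(g_{(1)})_{(2')}$ up to multiplication by $g_{(2)}$; dually \eqref{eq:antipode_3} controls $\underline{\Delta}\circ\underline{S}^{-1}$ on the right-hand side, where $h_{(1)}^{[1]}\otimes_{B^{op}}h_{(1)}^{[2]}$ unfolds to $\underline{\Delta}(\underline{S}^{-1}(h_{(1)}))$ pushed through $\underline{S}$. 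I expect the computation to proceed by expanding both sides via coassociativity of $\underline{\Delta}$ into a threefold Sweedler expression in $g$, inserting \eqref{eq:antipode_2} on the left and \eqref{eq:antipode_3} on the right, and checking that the two resulting expressions agree. The relations $\underline{S}\circ t=s$ and $\underline{S}^{-1}\circ s=t$, together with anti-multiplicativity, are what guarantee that each side is well-defined modulo the relations defining $\otimes_{B^{op}}$ and $\otimes_B$; indeed these are needed even to make sense of the individual legs. An alternative, perhaps more economical, route is to transport the coassociativity of $\underline{\Delta}$ through the bijective canonical map $\beta$, using the explicit inverse $\widetilde{\beta}=\beta^{-1}$ of Proposition~\ref{prop:full_is_Hopf}: applying $\beta$ in the appropriate leg to both sides of \eqref{eq:right_coprod_prop3} should convert the statement into the coassociativity $(\underline{\Delta}\otimes_B\id)\circ\underline{\Delta}=(\id\otimes_B\underline{\Delta})\circ\underline{\Delta}$, from which \eqref{eq:right_coprod_prop3} follows upon applying $\beta^{-1}$. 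Either way, the bookkeeping of the balanced tensor products is the delicate part, while the algebra is driven entirely by the antipode axioms and coassociativity.
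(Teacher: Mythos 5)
Your arguments for \eqref{eq:right_coprod_prop1}, \eqref{eq:right_coprod_prop2} and \eqref{eq:right_coprod_prop4} are correct and coincide with the paper's (anti-multiplicativity of $\underline{S}$ together with \eqref{eq:antipode_5} and \eqref{eq:antipode_1}; rewriting \eqref{eq:invertible_can2}; evaluating \eqref{eq:right_coprod} at $\underline{S}(h)$); you even supply details the paper omits. The genuine gap is \eqref{eq:right_coprod_prop3}, where neither of your two routes is a proof. Route (a), ``expand and insert \eqref{eq:antipode_2}/\eqref{eq:antipode_3}'', cannot be executed as stated: both sides of \eqref{eq:right_coprod_prop3} are pure tensor expressions, whereas \eqref{eq:antipode_2} and \eqref{eq:antipode_3} are identities about \emph{products} of coproduct legs, so there is nothing to insert them into. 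As you yourself observe, \eqref{eq:antipode_2} determines $\underline{\Delta}(\underline{S}(g_{(1)}))$ only ``up to multiplication by $g_{(2)}$''; stripping off that multiplication is exactly the missing mechanism, and it is supplied by the injectivity of a canonical map --- which is the paper's device (``apply $\lambda\otimes_B\id_{\mathcal{H}}$ to both sides''). Concretely, consider $T:\mathcal{H}\otimes_{B^{op}}\mathcal{H}\to\mathcal{H}\otimes_B\mathcal{H}$, $x\otimes_{B^{op}}y\mapsto y_{(1)}\underline{S}^{-1}(x)\otimes_B y_{(2)}$, which is injective because $T=\lambda\circ(\underline{S}^{-1}\otimes\id_{\mathcal{H}})$ is a composite of bijections. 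Unfolding the definitions, \eqref{eq:antipode_2} for $g=\underline{S}^{-1}(h)$ says precisely $T(h^{[1]}\otimes_{B^{op}}h^{[2]})=1_{\mathcal{H}}\otimes_Bh$; applying $T$ to the first two legs then turns the left-hand side of \eqref{eq:right_coprod_prop3} into $(\id_{\mathcal{H}}\otimes_B\underline{\Delta})(1_{\mathcal{H}}\otimes_Bh)=1_{\mathcal{H}}\otimes_Bh_{(1)}\otimes_Bh_{(2)}$ and the right-hand side into $(1_{\mathcal{H}}\otimes_Bh_{(1)})\otimes_Bh_{(2)}$, and one concludes by injectivity. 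Your plan never invokes such a step, so it cannot close.

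Route (b) rests on a false premise. Applying $\beta$ to the first two legs of the left-hand side produces $(h^{[1]})_{(1)}\otimes_B(h^{[1]})_{(2)}(h^{[2]})_{(1)}\otimes_B(h^{[2]})_{(2)}$, and the middle product $\underline{S}(g_{(2)})_{(2)}\,\underline{S}(g_{(1)})_{(1)}$ --- a product of coproduct legs of \emph{two} antipode images --- is not of the shape governed by any axiom; in particular the transformed statement is not the coassociativity of $\underline{\Delta}$. Nor could it be: \eqref{eq:right_coprod_prop3} is not a formal consequence of coassociativity, anti-multiplicativity of $\underline{S}$ and $\underline{S}\circ t=s$. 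It expresses how $\underline{\Delta}$ interacts with $\underline{\Delta}\circ\underline{S}$; for a Hopf algebra this would follow from $S$ being an anti-coalgebra map, a statement that does not even typecheck for bialgebroids and whose role is played exactly by \eqref{eq:antipode_2}--\eqref{eq:antipode_3}. So the conjugation picture you lead with (the right coproduct as the flipped $\underline{S}$-conjugate of $\underline{\Delta}$) gives \eqref{eq:right_coprod_prop4} for free but can never give \eqref{eq:right_coprod_prop3}: any correct proof must route the antipode axioms through an injective canonical map, as the paper's does.
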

	\begin{proof}
		For the first equation, we use the anti-multiplicative property of $\underline{S}$ and \eqref{eq:antipode_5}. For the second one just rewrite eq.~\eqref{eq:invertible_can2} using the definition \eqref{eq:right_coprod}. The third equation is proved by applying on both sides the isomorphism $\lambda\otimes_B\mathrm{id}_{\mathcal{H}}$ and showing that one gets the same result. The last equation comes from evaluating \eqref{eq:right_coprod} at $k=\underline{S}(h)$ with $h\in \mathcal{H}$. 
	\end{proof}
	\subsection{Twists}
	In this subsection we introduce the notion of a twist for a bialgebroid. Since we are working on a bimodule over an algebra, we must specify which module structure we refer to. We exclusively work with the right $B$-module structure of \eqref{eq:bimod_strc_ring} given by the action of the target map. 
	
	The convolution algebra $\mathcal{H}^*$ of a $B$-bialgebroid $\mathcal{H}$ is the algebra of right $B$-linear morphisms $\phi_*: \mathcal{H} \to B$ with product given by
	\begin{equation}
		(\phi_*\psi_*)(h):=\psi_*\left(s(\phi_*(h_{(1)}))h_{(2)}\right),\quad h\in\mathcal{H}.
	\end{equation}
	The unit of this algebra is the counit $\underline{\epsilon}$ of $\mathcal{H}$. 
	(For more details see \cite{sweedler1975predual}). 
	There is a natural right $\mathcal{H}^*$-action on $\mathcal{H}$ given by
	\begin{equation}
		\label{eq:right_H_*_action}
		h\triangleleft\phi_*:=\left(s(\phi_*(h_{(1)}))\right)h_{(2)},\quad h\in\mathcal{H}, \quad\phi_*\in\mathcal{H}^*.
	\end{equation}
	In what follows we focus on a particular subgroup contained in the unit group of $\mathcal{H}^*$, the one of twists. 
	\begin{defn}
		\label{defn:twists}
		Let $\mathcal{H}$ be a $B$-bialgebroid, we say that an element $\phi_*\in\mathcal{H}^*$ is a \textbf{twist} if it is in the unit group $(\mathcal{H}^*)^+$ of $\mathcal{H}^*$ and moreover it is such that
		\begin{align}
			\label{eq:twist_unital}
			1_\mathcal{H}\triangleleft\phi_* & =1_\mathcal{H}, \\
			\label{eq:twist_multiplicative}
			(h\triangleleft\phi_*)(h'\triangleleft\phi_*) & = hh'\triangleleft\phi_*,\quad\forall h,h\in\mathcal{H} .
		\end{align}
		The counit of $\mathcal{H}$ is the neutral element of this group. For the rest of the paper, we denote the group of twists by $\mathcal{T}^*$. 
	\end{defn}
	

	\begin{remark}\label{renos}
		In the original definition of twists in \cite{bohm2019alternative} there was an additional requirement:
		\begin{equation*}
			\underline{S}(h_{(1)})\triangleleft\phi_*\otimes'_{B}h_{(2)}=\underline{S}(h_{(1)})\otimes'_{B}h_{(2)}\triangleleft\phi^{-1}_* . 
		\end{equation*}
		This involves the balanced tensor product $\otimes'_{B}$ obtained from $\mathcal{H}\otimes\mathcal{H}$ over the ideal generated by $hs(b)\otimes h'-h\otimes s\circ\phi_*^{-1}\circ s(b)h'$, for $b\in B$ and $h, h'\in\mathcal{H}$. For the present paper a twist does not need to fulfill this property and, in particular, we have that the notion of twist makes sense even for bialgebroids.
		
	\end{remark}
	
	\begin{thm}
		\label{thm:Bohm_thm}
		\cite{bohm2019alternative} Let $(\mathcal{H},\underline{S})$ be a full Hopf algebroid over an algebra $B$, then $(\mathcal{H},\underline{S}')$ is a Hopf algebroid with the same underlying $B$-bialgebroid structure if and only if there exists a twist $\phi_*\in\mathcal{T}^*$ such that
		\begin{equation*}
			\underline{S}'(h)=\underline{S}(h\triangleleft\phi_*)\,\quad h\in\mathcal{H} .
		\end{equation*}
	\end{thm}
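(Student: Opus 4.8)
The plan is to organize the argument around the map $T_{\phi_*}\colon\mathcal{H}\to\mathcal{H}$, $T_{\phi_*}(h):=h\triangleleft\phi_*=s(\phi_*(h_{(1)}))h_{(2)}$ of \eqref{eq:right_H_*_action}, and to observe that $\phi_*\in\mathcal{H}^*$ is a twist exactly when $T_{\phi_*}$ is a unital algebra automorphism fixing the target, $T_{\phi_*}\circ t=t$. Conditions \eqref{eq:twist_unital} and \eqref{eq:twist_multiplicative} say precisely that $T_{\phi_*}$ is unital and multiplicative; invertibility of $\phi_*$ in $\mathcal{H}^*$ gives $T_{\phi_*}^{-1}=T_{\phi_*^{-1}}$ since $\triangleleft$ is a right action with $\underline{\epsilon}$ acting as the identity, while \eqref{eq:twist_unital} forces $s(\phi_*(1_\mathcal{H}))=1_\mathcal{H}$, whence $T_{\phi_*}\circ t=t$ using $\underline{\Delta}(t(b))=1_\mathcal{H}\otimes_B t(b)$. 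The identity I would isolate first, valid for every $\phi_*\in\mathcal{H}^*$ by coassociativity alone, is
\[
\underline{\Delta}(h\triangleleft\phi_*)=(h_{(1)}\triangleleft\phi_*)\otimes_B h_{(2)},
\]
that is, $T_{\phi_*}$ is right $\underline{\Delta}$-colinear; substituting $T_{\phi_*}^{-1}$ gives $\underline{\Delta}\circ T_{\phi_*}^{-1}=(T_{\phi_*}^{-1}\otimes_B\mathrm{id})\circ\underline{\Delta}$ as well.

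Assuming a twist $\phi_*$, I set $\underline{S}':=\underline{S}\circ T_{\phi_*}$. As the composite of the bijective anti-algebra morphism $\underline{S}$ with the algebra automorphism $T_{\phi_*}$ it is a bijective anti-algebra morphism, and $\underline{S}'\circ t=\underline{S}\circ t=s$, giving \eqref{eq:antipode_1}. Then \eqref{eq:antipode_2} for $\underline{S}'$ falls out of \eqref{eq:antipode_2} for $\underline{S}$ upon substituting $k=T_{\phi_*}(h)$ and rewriting $k_{(1)}\otimes_B k_{(2)}=T_{\phi_*}(h_{(1)})\otimes_B h_{(2)}$ via the colinearity identity, since $\underline{S}\circ T_{\phi_*}=\underline{S}'$. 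For \eqref{eq:antipode_3}, whose left-hand side involves $\underline{S}'^{-1}=T_{\phi_*}^{-1}\circ\underline{S}^{-1}$, I would apply the map $T_{\phi_*}^{-1}\otimes_B\mathrm{id}_\mathcal{H}$ — well defined on $\otimes_B$ because $T_{\phi_*}^{-1}\circ t=t$ — to \eqref{eq:antipode_3} for $\underline{S}$, using the colinearity of $T_{\phi_*}^{-1}$ to recognize the result as \eqref{eq:antipode_3} for $\underline{S}'$. Hence $(\mathcal{H},\underline{S}')$ is a full Hopf algebroid.

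Conversely, given a second antipode $\underline{S}'$, put $F:=\underline{S}^{-1}\circ\underline{S}'$, a unital algebra automorphism with $F\circ t=\underline{S}^{-1}\circ s=t$, so $\underline{S}'=\underline{S}\circ F$. Granting that $F$ is right $\underline{\Delta}$-colinear, $\underline{\Delta}\circ F=(F\otimes_B\mathrm{id})\circ\underline{\Delta}$, I define $\phi_*:=\underline{\epsilon}\circ F$, which is right $B$-linear, so $\phi_*\in\mathcal{H}^*$. Colinearity and the counit identity \eqref{eq:counital_bialgebroid} give $T_{\phi_*}(h)=s(\underline{\epsilon}(F(h_{(1)})))h_{(2)}=s(\underline{\epsilon}(F(h)_{(1)}))F(h)_{(2)}=F(h)$, i.e. $T_{\phi_*}=F$. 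Consequently \eqref{eq:twist_unital} and \eqref{eq:twist_multiplicative} hold because $F$ is a unital algebra morphism, and $\phi_*$ is invertible in $\mathcal{H}^*$ with inverse $\underline{\epsilon}\circ F^{-1}$ (the convolution products collapse to $\underline{\epsilon}$ using $T_{\phi_*}=F$ and $T_{\underline{\epsilon}\circ F^{-1}}=F^{-1}$). Finally $\underline{S}(h\triangleleft\phi_*)=\underline{S}(F(h))=\underline{S}'(h)$, as wanted.

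The hard part is the colinearity of $F$ in this converse, the one step that is not a formal rewriting. I would prove it using that the canonical map $\lambda$ of \eqref{eq:canonical_bialgeb2} involves only the coproduct and is bijective by the remark after Proposition~\ref{prop:full_is_Hopf}. Introduce $\Theta\colon\mathcal{H}\otimes_B\mathcal{H}\to\mathcal{H}\otimes_B\mathcal{H}$, $x\otimes_B y\mapsto\underline{S}(x)_{(1)}y\otimes_B\underline{S}(x)_{(2)}$, and check $\Theta(x\otimes_B y)=\lambda(y\circledast_{B^{op}}\underline{S}(x))$; since $x\otimes_B y\mapsto y\circledast_{B^{op}}\underline{S}(x)$ is a bijection (well defined because $\underline{S}\circ t=s$) and $\lambda$ is bijective, $\Theta$ is injective. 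Evaluating \eqref{eq:antipode_2} for $\underline{S}'=\underline{S}\circ F$ identifies its left-hand side with $\Theta\big(F(h_{(1)})\otimes_B h_{(2)}\big)$ and its right-hand side with $1_\mathcal{H}\otimes_B\underline{S}'(h)$; evaluating \eqref{eq:antipode_2} for $\underline{S}$ at $k=F(h)$ identifies its left-hand side with $\Theta(\underline{\Delta}(F(h)))$ and the same right-hand side. Equality of the right-hand sides and injectivity of $\Theta$ then force $\underline{\Delta}(F(h))=F(h_{(1)})\otimes_B h_{(2)}$, which is exactly the colinearity needed. This is the only place where bijectivity of the canonical maps, rather than formal manipulation, enters.
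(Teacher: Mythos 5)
Your proof is correct, and it splits into two halves of different character relative to the paper. The forward direction is essentially the paper's argument: both define $\underline{S}':=\underline{S}\circ T_{\phi_*}$ and rest everything on the colinearity identity $\underline{\Delta}(h\triangleleft\psi_*)=(h_{(1)}\triangleleft\psi_*)\otimes_B h_{(2)}$ before invoking the antipode axioms for $\underline{S}$; your bookkeeping $T_{\phi_*}^{-1}=T_{\phi_*^{-1}}$, $\underline{S}'^{-1}=T_{\phi_*^{-1}}\circ\underline{S}^{-1}$ is in fact cleaner than the paper's, whose stated formula for $\underline{S}'^{-1}$ and whose verification of \eqref{eq:antipode_3} carry a $\phi_*$ versus $\phi_*^{-1}$ mismatch that your version avoids. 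The converse is where you genuinely diverge. The paper takes $\phi_*:=\underline{\epsilon}\circ\underline{S}'^{-1}\circ\underline{S}$ (the inverse of your twist) and checks invertibility, unitality and multiplicativity of its action by direct Sweedler computations resting on Lemma~\ref{lem:right_coprod}, eq.~\eqref{eq:counit_product} and eqs.~\eqref{eq:antipode_5}--\eqref{eq:antipode_6}; notably it never verifies the displayed relation $\underline{S}'(h)=\underline{S}(h\triangleleft\phi_*)$, which requires identifying the action of the twist with $\underline{S}^{-1}\circ\underline{S}'$. You instead isolate exactly this structural fact: setting $F=\underline{S}^{-1}\circ\underline{S}'$, you prove the colinearity $\underline{\Delta}\circ F=(F\otimes_B\mathrm{id})\circ\underline{\Delta}$ by factoring the left-hand side of \eqref{eq:antipode_2} through $\Theta=\lambda\circ\bigl(x\otimes_B y\mapsto y\circledast_{B^{op}}\underline{S}(x)\bigr)$ and using bijectivity of $\lambda$, which the paper makes available in the remark containing \eqref{eq:invertible_can2}; then $T_{\underline{\epsilon}\circ F}=F$, and all twist axioms together with the displayed relation follow formally. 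What each route buys: the paper's is a self-contained computation but leaves the final identification implicit (a genuine omission your argument fills); yours is conceptually sharper --- antipodes differ exactly by coproduct-colinear unital algebra automorphisms fixing $t$, and these are exactly the twist actions --- at the price of invoking bijectivity of the second canonical map. Two minor points to tighten: well-definedness of $T_{\phi_*}^{-1}\otimes_B\mathrm{id}$ on $\otimes_B$ needs $T_{\phi_*}^{-1}(t(b)u)=t(b)\,T_{\phi_*}^{-1}(u)$, which follows from multiplicativity of $T_{\phi_*}^{-1}$ together with $T_{\phi_*}^{-1}\circ t=t$ (you cite only the latter); and bijectivity of $x\otimes_B y\mapsto y\circledast_{B^{op}}\underline{S}(x)$ deserves the one-line inverse $u\circledast_{B^{op}}u'\mapsto\underline{S}^{-1}(u')\otimes_B u$. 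Neither affects correctness.
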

	\begin{proof}
		Our proof differs slightly from the one in \cite{bohm2019alternative}, thus we report it.
		
		Let $\phi_*\in\mathcal{T}^*$ and $\underline{S}$ be an antipode on $\mathcal{H}$. Then the map $\underline{S}'$ defined above is invertible with inverse
		\begin{equation*}
			\underline{S}'^{-1}(h):=\underline{S}(h)\triangleleft\phi^{-1} .
		\end{equation*}
		One checks easily that $\underline{S}'$ is an anti-algebra morphism since $\underline{S}$ is such and given property \eqref{eq:twist_multiplicative}. Using $t(b)\triangleleft\phi_*=b$ for every $b\in B$, one checks that $\underline{S}'\circ t=s$. Now to prove eqs. \eqref{eq:antipode_2} and \eqref{eq:antipode_3} we first notice that for any $\psi_*\in\mathcal{H}^*$ one has
		\begin{equation*}
			(h\triangleleft\psi_*)_{(1)}\otimes_B(h\triangleleft\psi_*)_{(2)}=\underline{\Delta}(h\triangleleft\psi_*)=h_{(1)}\triangleleft\psi_*\otimes_B h_{(2)},
		\end{equation*}
		being the coproduct a left $B$-module morphism. Thus, for any $h\in\mathcal{H}$ we compute,
		\begin{equation*}
			\begin{split}
				\underline{S}'(h_{(1)})_{(1)'}h_{(2)}\otimes_B\underline{S}'(h_{(1)})_{(2)'}&=\underline{S}(h_{(1)}\triangleleft\phi_*)_{(1)'}h_{(2)}\otimes_B\underline{S}(h_{(1)}\triangleleft\phi_*)_{(2)'}\\
				&=\underline{S}((h\triangleleft\phi_*)_{(1)})_{(1)'}(h\triangleleft\phi_*)_{(2)}\otimes_B\underline{S}((h\triangleleft\phi_*)_{(1)})_{(2)'}\\
				&=1_{\mathcal{H}}\otimes_B\underline{S}(h\triangleleft\phi*)=1_{\mathcal{H}}\otimes_B\underline{S}'(h);
			\end{split}
		\end{equation*}
		\begin{equation*}
			\begin{split}
				\underline{S}'^{-1}(h_{(2)})_{(1)'}\otimes_B\underline{S}'^{-1}(h_{(2)})_{(2)'}h_{(1)}&=(\underline{S}^{-1}(h_{(2)})\triangleleft\phi_*)_{(1)'}\otimes(\underline{S}^{-1}(h_{(2)})\triangleleft\phi_*)_{(2)'}h_{(1)}\\
				&=\underline{S}^{-1}(h_{(2)})_{(1)'}\triangleleft\phi_*\otimes_B\underline{S}^{-1}(h_{(2)})_{(2)'}h_{(1)}\\
				&=\underline{S}^{-1}(h)\triangleleft\phi_*\otimes_B1_{\mathcal{H}}=\underline{S}'^{-1}(h)\otimes_B1_{\mathcal{H}} .
			\end{split}
		\end{equation*}
		We conclude that $\underline{S}'$ is an invertible antipode on $\mathcal{H}$.
		
		On the other hand, consider $\underline{S}$ and $\underline{S}'$ to be two antipodes on $\mathcal{H}$. The map $\underline{\epsilon}\circ\underline{S}'^{-1}\circ\underline{S}$ from $\mathcal{H}\to B$ lies in $\mathcal{T}^*$. Right $B$-linearity is easy to check. The map $\underline{\epsilon}\circ\underline{S}^{-1}\circ\underline{S}'$ is the inverse one: indeed we have
		\begin{equation*}
			\begin{split}
				(\underline{\epsilon}\circ\underline{S}'^{-1}\circ\underline{S})(\underline{\epsilon}\circ\underline{S}^{-1}\circ\underline{S}')(h)&=\underline{\epsilon}\circ\underline{S}^{-1}\circ\underline{S}'\left[s(\underline{\epsilon}\circ\underline{S}'^{-1}\circ\underline{S}(h_{(1)}))h_{(2)}\right]\\
				&=\underline{\epsilon}\circ\underline{S}^{-1}\circ\underline{S}'\left[s\circ\underline{\epsilon}\circ\underline{S}'^{-1}(\underline{S}(h_{(1)}))h_{(2)}\right]\\
				&=\underline{\epsilon}\circ\underline{S}^{-1}\circ\underline{S}'\left[s\circ\underline{\epsilon}\circ\underline{S}'^{-1}(\underline{S}(h)^{[2]})\underline{S}^{-1}(\underline{S}(h)^{[1]})\right]\\
				&=\underline{\epsilon}\circ\underline{S}^{-1}\circ\underline{S}'\left[s\circ\underline{\epsilon}\circ\underline{S}'^{-1}(\underline{S}(h)^{[2]'})\underline{S}'^{-1}(\underline{S}(h)^{[1]'})\right]\\
				&=\underline{\epsilon}\circ\underline{S}^{-1}\left[\underline{S}(h)^{[1]'}\underline{S}'\circ s\circ\underline{\epsilon}\circ\underline{S}'^{-1}(\underline{S}(h)^{[2]'})\right]\\
				&=\underline{\epsilon}\circ\underline{S}^{-1}\left[\underline{S}(h)^{[1]'}\underline{S}'(\underline{S}\tensor{(h)}{^{[2]'}}_{(1)})\underline{S}\tensor{(h)}{^{[2]'}}_{(2)}\right]\\
				&=\underline{\epsilon}\circ\underline{S}^{-1}\left[\underline{S}\tensor{(h)}{_{(1)}^{[1]'}}\underline{S}'(\underline{S}\tensor{(h)}{_{(1)}}^{[2]'})\underline{S}(h)_{(2)}\right]\\
				&=\underline{\epsilon}\circ\underline{S}^{-1}\left[s(\underline{\epsilon}(\underline{S}(h)_{(1)}))\underline{S}(h)_{(2)}\right] \\ &=\underline{\epsilon}\circ\underline{S}^{-1}\circ\underline{S}(h)=\underline{\epsilon}(h).
			\end{split}
		\end{equation*}
		where in order we used eqs.\eqref{eq:right_coprod_prop1}-\eqref{eq:right_coprod_prop4} of Lemma~\ref{lem:right_coprod}. 
		
		To conclude the proof we have to show that the action of $\underline{\epsilon}\circ\underline{S}'^{-1}\circ\underline{S}$ is unital and multiplicative. The first property comes from the fact that both compositions are of of unital maps. For the second proprrty,  for any $h,h'\in \mathcal{H}$ we compute 
		\begin{equation*}
			\begin{split}
				(hh')\triangleleft\underline{\epsilon}\circ\underline{S}'^{-1}\circ\underline{S}&=s\left[\epsilon\circ\underline{S}'^{-1}\circ\underline{S}(h_{(1)}h'_{(1)})\right]h_{(2)}h'_{(2)}\\
				&=s\circ\underline{\epsilon}\left[\underline{S}'^{-1}\circ\underline{S}(h_{(1)})\underline{S}'^{-1}\circ\underline{S}(h'_{(1)})\right]h_{(2)}h'_{(2)}\\
				&=s\circ\underline{\epsilon}\left[\underline{S}'^{-1}\circ\underline{S}(h_{(1)})\underline{S}'^{-1}\circ\underline{S}(h'_{(1)})\right]h_{(2)}h'_{(2)}\\
				&=s\circ\underline{\epsilon}\left[\underline{S}'^{-1}\circ\underline{S}(h_{(1)})t\circ\underline{\epsilon}\circ\underline{S}'^{-1}\circ\underline{S}(h'_{(1)})\right]h_{(2)}h'_{(2)}\\
				&=s\circ\underline{\epsilon}\left[\underline{S}'^{-1}\circ\underline{S}(h_{(1)}t\circ\underline{\epsilon}\circ\underline{S}'^{-1}\circ\underline{S}(h'_{(1)}))\right]h_{(2)}h'_{(2)}\\
				&=s\circ\underline{\epsilon}\left[\underline{S}'^{-1}\circ\underline{S}(h_{(1)})\right]h_{(2)}s\circ\underline{\epsilon}\left[\circ\underline{S}'^{-1}\circ\underline{S}(h'_{(1)})\right]h'_{(2)}\\
				&=(h\triangleleft\underline{\epsilon}\circ\underline{S}'^{-1}\circ\underline{S})(h'\triangleleft\underline{\epsilon}\circ\underline{S}'^{-1}\circ\underline{S}) .
			\end{split}
		\end{equation*}
		Here in the third line, we have used that $\underline{S'}^{-1}\circ\underline{S}$ is an algebra morphism, in the fourth eq.~\eqref{eq:counit_product}, in the fifth the combination of eqs.\eqref{eq:antipode_5} and \eqref{eq:antipode_6} and in the last one the Takeuchi product \eqref{eq:Takeuchi_prod}.
	\end{proof}
	
	This is a stronger version of Theorem~4.1 in \cite{bohm2019alternative} since we showed that the requirement in the Remark \ref{renos} is not needed.  
	
	\begin{remark}
		A \textit{Galois object} is a $H$-Hopf--Galois extension such that the subalgebra of coaction invariant elements is the ground field $B=\mathbb{K}$. In this case, the E-S is shown to be a Hopf algebra \cite{schau_bigal}. For a Hopf algebra, twists are convolution invertible characters and their action on the antipode provides an antipode in the sense of Hopf algebroid. In other words, starting from a Hopf algebra, using twists we get a full Hopf algebroid, a phenomenon which is central in \cite{connes1998hopf}.    
	\end{remark}

	\section{The Ehresmann--Schauenburg Hopf algebroid}
	We turn to the study of the theory of twists, as introduced previously, in the special case of the Erhesmann-Schauenburg (from now on E-S) bialgebroid associated to a principal $H$-comodule algebra $B\subseteq A$. 
	
	Thus we briefly recall the theory of the quantum gauge groupoid associated to a principal $H$-comodule algebra $B \subseteq A$. 
	The total algebra of the E-S bialgebroid is given by $\mathcal{C}(A,H)$ as defined in \ref{lem:equiv_of _coinvarinats}. Its $B$-coring structure is given by the following coproduct and counit: 
	\beq
	\underline{\Delta} :\mathcal{C}(A,H)\longrightarrow \mathcal{C}(A,H)\otimes_B \mathcal{C}(A,H), 
	\quad a\otimes \tilde{a}\longmapsto a_{(0)}\otimes \tensor{a}{_{(1)}^{\langle1\rangle}}
	\otimes_B\tensor{a}{_{(1)}^{\langle2\rangle}}\otimes  \tilde{a} \label{eq:coprod_bialgebroid} 
	\eeq
	\beq	
	\underline{\epsilon} :\mathcal{C}(A,H)\longrightarrow B , \quad
	a\otimes \tilde{a}\longmapsto a\tilde{a}  \, .
	\eeq
	
	\noindent
	The space $\mathcal{C}(A,H)$ is a subalgebra of $A\otimes A^{op}$, that is the product in $\mathcal{C}(A,H)$ is
	\begin{equation}
		(a\otimes \tilde{a})(c\otimes \tilde{c})=ac\otimes \tilde{c}\tilde{a},
	\end{equation}
	with $a\otimes \tilde{a}$ and $c\otimes \tilde{c}\in\mathcal{C}(A,H)$, and the $B^{e}$-ring structure is given 
	by the source and target maps
	\begin{align}
		\label{eq:source_bialgeboid}
		s:B\longrightarrow \mathcal{C}(A,H),\quad b\longmapsto b\otimes 1_A,  \\
		\label{eq:target_bialgeboid}
		t:B\longrightarrow \mathcal{C}(A,H),\quad b\longmapsto 1_A\otimes b.
	\end{align}
	All the compatibility conditions between the coring and ring structure are ensured for this case, so $\mathcal{C}(A,H)$ is a (left) $B$-bialgebroid.
	
	\begin{remark}
		Faithfully flatness of $A$ as a left $B$-module is sufficient to prove that the above set of data gives a bialgebroid over $B$. For the details see section $34$ in \cite{brz-wis}. An example of faithful flatness not being needed is in section $3$ in \cite{han2021gauge}.
	\end{remark}
	
	\subsection{The flip}
	We give a sufficient condition for the map $\mathrm{flip}_A:A\otimes A\to A\otimes A$, 
	$a \otimes \tilde{a} \to \tilde{a} \otimes a$, to be an antipode for the E-S bialgebroid of a Hopf--Galois extension $B\subseteq A$. Clearly $\mathrm{flip}_A$ is an anti-algebra morphism. It also satisfies, 
	for any $b\in B$, 
	\begin{equation*}
		\mathrm{flip}_A(t(b))=\mathrm{flip}(1_A\otimes b)=b\otimes1_A=s(b). 
	\end{equation*}
	Recall that $\mathcal{C}(A,H)$ is a sub-algebra of $A\otimes A^{op}$. 
	To prove that $\mathrm{flip}_A$ is actually an antipode it only remains to check eqs. \eqref{eq:antipode_2} and \eqref{eq:antipode_3}.
	
	\begin{prop}
		\label{prop:flip_antipode}
		Let $B\subseteq A$ be a principal $H$-comodule algebra and let $\mathcal{C}(A,H)$ be the associated E-S bialgebroid. If $\mathrm{flip}_A$ is a right $H$-comodule endomorphism 
		of $A\otimes A$ for the diagonal coaction $\rho^{\otimes}$, then is an antipode for $\mathcal{C}(A,H)$.
	\end{prop}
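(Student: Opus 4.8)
The plan is to reduce the statement to the two coproduct axioms \eqref{eq:antipode_2} and \eqref{eq:antipode_3}, since the remaining requirements for an antipode, namely that $\mathrm{flip}_A$ be an anti-algebra morphism and satisfy \eqref{eq:antipode_1}, have already been recorded before the statement. Before checking them I would first make sure that $\mathrm{flip}_A$ is a genuine bijective endomorphism of the total algebra $\mathcal{C}(A,H)$: by Lemma~\ref{lem:equiv_of _coinvarinats} we have $\mathcal{C}(A,H)=(A\otimes A)^{coH}$, and any right $H$-comodule endomorphism of $A\otimes A$ for the coaction $\rho^{\otimes}$ maps coinvariants to coinvariants, so the hypothesis guarantees $\mathrm{flip}_A\big(\mathcal{C}(A,H)\big)\subseteq\mathcal{C}(A,H)$; since $\mathrm{flip}_A$ is an involution it is automatically bijective with $\mathrm{flip}_A^{-1}=\mathrm{flip}_A$. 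This is precisely what makes the statement well posed, as an antipode must be a map $\mathcal{C}(A,H)\to\mathcal{C}(A,H)$.

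For \eqref{eq:antipode_2} I would compute directly on a generic $h=a\otimes\tilde a\in\mathcal{C}(A,H)$. Expanding \eqref{eq:coprod_bialgebroid} gives $h_{(1)}=a_{(0)}\otimes a_{(1)}^{\langle 1\rangle}$ and $h_{(2)}=a_{(1)}^{\langle 2\rangle}\otimes\tilde a$; applying $\underline{S}=\mathrm{flip}_A$ to $h_{(1)}$ (legitimate by the previous step), expanding \eqref{eq:coprod_bialgebroid} once more, and forming in the first tensor factor the product $\underline{S}(h_{(1)})_{(1')}h_{(2)}$ inside $\mathcal{C}(A,H)\subseteq A\otimes A^{op}$, produces an expression carrying a coaction on the translation leg $a_{(1)}^{\langle 1\rangle}$. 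I would collapse it in stages: property \eqref{eq:transl_2} rewrites that coaction in terms of a resplitting in which $S(a_{(1)})$ appears, after which \eqref{eq:transl_3} telescopes the $a$-translation sitting in the first leg to $1_A$. The defining relation of $\mathcal{L}_A=\mathcal{C}(A,H)$ for $h$ (Lemma~\ref{lem:equiv_of _coinvarinats}) then rewrites the triple $(a_{(0)},\tilde a,S(a_{(1)}))$ as $(a,\tilde a_{(0)},\tilde a_{(1)})$, and a last use of \eqref{eq:transl_5} applied to $\tilde a$ collapses the remaining pair of legs, giving exactly $1_{\mathcal{H}}\otimes_B(\tilde a\otimes a)=1_{\mathcal{H}}\otimes_B\underline{S}(h)$.

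Equation \eqref{eq:antipode_3} is treated by the mirror computation, now applying $\underline{S}^{-1}=\mathrm{flip}_A$ to $h_{(2)}$ and running the analogous chain with \eqref{eq:transl_1} in place of \eqref{eq:transl_2}. Here, however, the reduction leaves a factor in which the structure-Hopf-algebra elements occur in the order $S(a_{(2)})a_{(1)}$, the opposite of the order $a_{(1)}S(a_{(2)})$ for which the antipode relation of $H$ collapses to $\epsilon$. I expect this to be the main obstacle, and it is resolved precisely by the hypothesis: the comodule-endomorphism condition $\tilde a_{(0)}\otimes a_{(0)}\otimes\tilde a_{(1)}a_{(1)}=\tilde a_{(0)}\otimes a_{(0)}\otimes a_{(1)}\tilde a_{(1)}$ is exactly the commutativity of the two coactions that permits these factors to be reordered, so that the antipode identity of $H$ applies and the left-hand side collapses to $\underline{S}^{-1}(h)\otimes_B 1_{\mathcal{H}}$. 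Thus, beyond the bookkeeping of the nested Sweedler and translation indices, the only conceptual input is that $\mathrm{flip}_A$ being $H$-colinear both keeps it inside $\mathcal{C}(A,H)$ and lets the antipode of $H$ be used in either order.
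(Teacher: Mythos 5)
Your proposal is correct, and for the setup and for \eqref{eq:antipode_2} it coincides with the paper's own proof: the same preliminary observation that $H$-colinearity makes $\mathrm{flip}_A$ an involutive endomorphism of $\mathcal{C}(A,H)=(A\otimes A)^{coH}$, followed by the same chain \eqref{eq:transl_2}, \eqref{eq:transl_3}, the $\mathcal{L}_A$-relation, \eqref{eq:transl_5} (incidentally, your attribution of that step to the relation for $h=a\otimes\tilde a$ is the accurate one: the paper's text cites $\tilde a\otimes a\in\mathcal{L}_A$, but the substitution $(a_{(0)},\tilde a,S(a_{(1)}))\mapsto(a,\tilde a_{(0)},\tilde a_{(1)})$ it performs is exactly the relation for $a\otimes\tilde a$). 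Where you genuinely diverge is \eqref{eq:antipode_3}. The paper never meets your anticipated obstruction $S(a_{(2)})a_{(1)}$: after expanding, the last tensor factor is literally $a_{(1)}^{\langle1\rangle}a_{(1)}^{\langle2\rangle}$, which \eqref{eq:transl_3} collapses to $\epsilon(a_{(1)})$, and then the defining relation of $\mathcal{C}(A,H)$ applied to the flipped element $\tilde a\otimes a$ (already secured by your first step) finishes in one line; no \eqref{eq:transl_1} and no reordering inside $H$ is required. Your mirror route, in which one pushes the coaction onto the $a$-legs via the $\mathcal{L}_A$-relation, reduces to the identity $\tau(S(a_{(1)}))a_{(0)}=a\otimes_B 1_A$, and proves it with \eqref{eq:transl_1} plus the commutativity of coaction legs that the hypothesis encodes, does work: the flipped-membership fact used by the paper is equivalent to precisely that reordering, so your diagnosis of where colinearity enters at the level of elements of $H$ is correct and arguably more illuminating. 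What it costs is length and some bookkeeping you gloss over (the reordering must be applied before the translation legs are collapsed, with the $\otimes_B$-spectator legs carried along), whereas the paper's choice of exploiting the coinvariance of $\tilde a\otimes a$ as a black box closes the argument immediately.
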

	\begin{proof}
		The coinvariance hypothesis on $\mathrm{flip}_A$ reads
		\begin{equation*}
			\rho^{\otimes}\circ\mathrm{flip}_A=(\mathrm{flip}_A\otimes\mathrm{id}_H)\circ\rho^{\otimes} .
		\end{equation*}
		Then, if $a\otimes \tilde{a}\in \mathcal{C}(A,H)=(A\otimes A)^{coH}$ so does $\mathrm{flip}_A(a\otimes \tilde{a})=\tilde{a}\otimes a$, that is the flip maps $\mathcal{C}(A,H)$ into itself. By the lemma \ref{lem:equiv_of _coinvarinats} we also have $\tilde{a}\otimes a\in\mathcal{L}_A$. For the left-side expression of eq.~\eqref{eq:antipode_2} we need
		\begin{align*}
			(a\otimes \tilde{a})_{(1)}\otimes_B(a\otimes \tilde{a})_{(2)}&=a_{(0)}\otimes\tensor{a}{_{(1)}^{\langle1\rangle}}\otimes_B\tensor{a}{_{(1)}^{\langle2\rangle}}\otimes \tilde{a},\\
			(\mathrm{flip}_A(a\otimes \tilde{a})_{(1)})_{(1')}\otimes_B(\mathrm{flip}_A(a\otimes \tilde{a})_{(1)})_{(2')}&=\tensor{a}{_{(1)}^{\langle1\rangle}_{(0)}}\otimes\tensor{a}{_{(1)}^{\langle1\rangle}_{(1)}^{\langle1\rangle}}\otimes_B\tensor{a}{_{(1)}^{\langle1\rangle}_{(1)}^{\langle2\rangle}}\otimes a_{(0)} .
		\end{align*}
		For the left hand side we thus get
		\begin{align*}
			\tensor{a}{_{(1)}^{\langle1\rangle}_{(0)}}&\tensor{a}{_{(1)}^{\langle2\rangle}}\otimes \tilde{a}\tensor{a}{_{(1)}^{\langle1\rangle}_{(1)}^{\langle1\rangle}}\otimes_B\tensor{a}{_{(1)}^{\langle1\rangle}_{(1)}^{\langle2\rangle}}\otimes a_{(0)}=\\
			=&\tensor{a}{_{(2)}^{\langle1\rangle}}\tensor{a}{_{(2)}^{\langle2\rangle}}\otimes \tilde{a}S(a_{(1)})^{\langle1\rangle}\otimes_BS(a_{(1)})^{\langle2\rangle}\otimes a_{(0)}\\
			=&1_A\otimes \tilde{a}S(a_{(1)})^{\langle1\rangle}\otimes_BS(a_{(1)})^{\langle2\rangle}\otimes a_{(0)}\\
			=&1_A\otimes \tilde{a}\tau(S(a_{(1)}))\otimes a_{(0)}\\
			=&1_A\otimes \tilde{a}_{(0)}\tau(\tilde{a}_{(1)})\otimes a\\
			=&1_A\otimes1_A\otimes_B\tilde{a}\otimes a=1_{\mathcal{C}(A,H)}\otimes\mathrm{flip}_A(a\otimes \tilde{a}) ;
		\end{align*}
		here in the first line we used eq.~\eqref{eq:transl_2}, in the second one the eq.~\eqref{eq:transl_3} and identity $a_{(1)}\epsilon(a_{(2)})=a_{(1)}$, in the third one just the definition of the translation map, in the fourth one the fact that $\tilde{a}\otimes a\in\mathcal{L}_A$ and finally in the fifth line eq.~\eqref{eq:transl_5}.
		
		Notice that $\mathrm{flip}_A^{-1}=\mathrm{flip}_A$, so to write down the expression on the left hand of eq.~\eqref{eq:antipode_3} in this case we need
		\begin{equation*}
			(\mathrm{flip}_A(a\otimes \tilde{a})_{(2)})_{(1')}\otimes_B(\mathrm{flip}_A(a\otimes \tilde{a})_{(2)})_{(2')}=\tilde{a}_{(0)}\otimes\tensor{\tilde{a}}{_{(1)}^{\langle1\rangle}}\otimes_B\tensor{\tilde{a}}{_{(1)}^{\langle2\rangle}}\otimes \tensor{a}{_{(1)}^{\langle2\rangle}}
		\end{equation*} 
		which becomes
		\begin{align*}
			\tilde{a}_{(0)}\otimes\tensor{\tilde{a}}{_{(1)}^{\langle1\rangle}}\otimes_B\tensor{\tilde{a}}{_{(1)}^{\langle2\rangle}}a_{(0)}\otimes\tensor{a}{_{(1)}^{\langle1\rangle}}\tensor{a}{_{(1)}^{\langle2\rangle}}&=\tilde{a}_{(0)}\otimes\tensor{\tilde{a}}{_{(1)}^{\langle1\rangle}}\otimes_B\tensor{\tilde{a}}{_{(1)}^{\langle2\rangle}}a_{(0)}\otimes\epsilon(a_{(1)})\\
			&=\tilde{a}_{(0)}\otimes\tau(\tilde{a}_{(1)})a\otimes1_A\\
			&=\tilde{a}\otimes a\otimes_B1_A\otimes 1_A=\mathrm{flip}_A(a\otimes \tilde{a})\otimes1_{\mathcal{C}(A,H)} .
		\end{align*}
		We used again eq.~\eqref{eq:transl_3}, the identity $a_{(0)}\epsilon(a_{(1)})=a$, and that $\tilde{a}\otimes a\in \mathcal{C}(A,H)$.
	\end{proof} 
	
	Using this result we have the following
	\begin{cor}
		\label{cor:antipode_comm}
		Let $B\subseteq A$ be a principal $H$-comodule algebra such that $H$ is commutative, then $\mathrm{flip}_A$ is an antipode for $\mathcal{C}(A,H)$.
	\end{cor}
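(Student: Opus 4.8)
The plan is to show that, under the hypothesis that $H$ is commutative, the map $\mathrm{flip}_A$ satisfies the single nontrivial hypothesis of Proposition~\ref{prop:flip_antipode}, namely that it is a right $H$-comodule endomorphism of $A\otimes A$ for the diagonal coaction $\rho^{\otimes}$. All the remaining properties required of an antipode — that $\mathrm{flip}_A$ is an anti-algebra morphism and that $\mathrm{flip}_A\circ t=s$ — were already verified in the discussion preceding the Proposition, so once the coinvariance condition is established the Corollary follows immediately by invoking Proposition~\ref{prop:flip_antipode}.

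To verify the coinvariance condition I would simply compute both sides of
\begin{equation*}
	\rho^{\otimes}\circ\mathrm{flip}_A=(\mathrm{flip}_A\otimes\mathrm{id}_H)\circ\rho^{\otimes}
\end{equation*}
applied to a general element $a\otimes\tilde{a}\in A\otimes A$. Recalling the definition of the diagonal coaction $\rho^{\otimes}(v\otimes w)=v_{(0)}\otimes w_{(0)}\otimes v_{(1)}w_{(1)}$, the left-hand side gives
\begin{equation*}
	(\rho^{\otimes}\circ\mathrm{flip}_A)(a\otimes\tilde{a})=\rho^{\otimes}(\tilde{a}\otimes a)=\tilde{a}_{(0)}\otimes a_{(0)}\otimes \tilde{a}_{(1)}a_{(1)},
\end{equation*}
whereas the right-hand side gives
\begin{equation*}
	((\mathrm{flip}_A\otimes\mathrm{id}_H)\circ\rho^{\otimes})(a\otimes\tilde{a})=(\mathrm{flip}_A\otimes\mathrm{id}_H)(a_{(0)}\otimes \tilde{a}_{(0)}\otimes a_{(1)}\tilde{a}_{(1)})=\tilde{a}_{(0)}\otimes a_{(0)}\otimes a_{(1)}\tilde{a}_{(1)}.
\end{equation*}
The two expressions coincide precisely when $\tilde{a}_{(1)}a_{(1)}=a_{(1)}\tilde{a}_{(1)}$ in $H$, and this holds for all $a,\tilde{a}$ exactly because $H$ is commutative. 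Thus commutativity of $H$ is the one and only place the hypothesis enters.

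I do not expect any genuine obstacle here: the argument is a direct unwinding of the definition of $\rho^{\otimes}$ together with the single algebraic fact that multiplication in $H$ is commutative. The only point requiring a modicum of care is the bookkeeping of the Sweedler indices on the diagonal coaction, making sure the factors of $H$ are multiplied in the correct order on each side so that the discrepancy is genuinely the commutator $\tilde{a}_{(1)}a_{(1)}$ versus $a_{(1)}\tilde{a}_{(1)}$ rather than something that would vanish automatically. Once this is in place, the Corollary is an immediate consequence of Proposition~\ref{prop:flip_antipode}.
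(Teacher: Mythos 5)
Your proposal is correct and follows exactly the paper's own argument: reduce the Corollary to the coinvariance hypothesis of Proposition~\ref{prop:flip_antipode}, then verify $\rho^{\otimes}\circ\mathrm{flip}_A=(\mathrm{flip}_A\otimes\mathrm{id}_H)\circ\rho^{\otimes}$ by direct computation, with commutativity of $H$ entering precisely to identify $\tilde{a}_{(1)}a_{(1)}$ with $a_{(1)}\tilde{a}_{(1)}$. Nothing is missing.
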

	\begin{proof}
		In view of the previous proposition, it is enough to show that $\mathrm{flip}_A$ is a right $H$-comodule endomorphism of $(A\otimes A,\rho^{\otimes})$. This is easily checked since for any $a\otimes \tilde{a}\in A\otimes A$ using commutativity of $H$ one computes,
		\begin{align*}
			\rho^{\otimes}(\mathrm{flip}_A(a\otimes \tilde{a}))&=\tilde{a}_{(0)}\otimes a_{(0)}\otimes \tilde{a}_{(1)}a_{(1)}\\
			&=\tilde{a}_{(0)}\otimes a_{(0)}\otimes a_{(1)}\tilde{a}_{(1)}\\
			&=(\mathrm{flip}_A\otimes\mathrm{id}_H)(a_{(0)}\otimes \tilde{a}_{(0)}\otimes a_{(1)}\tilde{a}_{(1)})\\
			&=(\mathrm{flip}_A\otimes\mathrm{id}_H)(\rho^{\otimes}(a\otimes \tilde{a})) .
		\end{align*}
		This concludes the proof. 
	\end{proof}
	
	\subsection{Twists for the E-S bialgebroid}
	For the E-S bialgebroid associated to a Hopf--Galois extension $\mathcal{C}(A,H)$, 
	from the definition of $\mathcal{H}^*$, using the coproduct \eqref{eq:coprod_bialgebroid}
	and the source map in \eqref{eq:source_bialgeboid}, the right action \eqref{eq:right_H_*_action} read
	\beq
	(a\otimes \tilde{a}) \triangleleft\phi_* = \big(\phi_*(a_{(0)}\otimes\tensor{a}{_{(1)}^{\langle1\rangle}})\tensor{a}{_{(1)}^{\langle2\rangle}}\big)\otimes \tilde{a}. 
	\eeq
	Then for the conditions in \ref{defn:twists} we have $\phi_*(1_A\otimes 1_A) = 1_A \otimes 1_A$ 
	and
	\begin{multline}	 \label{eq:multiplicative_action_twists}
		\Big(\phi_*(a_{(0)}c_{(0)}\otimes\tensor{c}{_{(1)}^{\langle1\rangle}}\tensor{a}{_{(1)}^{\langle1\rangle}})\tensor{a}{_{(1)}^{\langle2\rangle}}\tensor{c}{_{(1)}^{\langle2\rangle}} \Big)\otimes \tilde{c}\tilde{a} \\ = \Big(\phi_*(a_{(0)}\otimes\tensor{a}{_{(1)}^{\langle1\rangle}})\tensor{a}{_{(1)}^{\langle2\rangle}}\phi_*(c_{(0)}\otimes\tensor{c}{_{(1)}^{\langle1\rangle}})\tensor{c}{_{(1)}^{\langle2\rangle}} \Big) \otimes \tilde{c}\tilde{a} , 
	\end{multline}
	for $a\otimes \tilde{a},c\otimes \tilde{c}\in\mathcal{C}(A,H)$.

	We next characterize the group of twists for the E-S bialgebroid. Let $\mathrm{Alg}^{H}(A)$ be the group of unital right $H$-comodule algebra automorphism of $A$, with product 
	\begin{equation}
		\label{eq:group_law_Alg}
		F\cdot G=G\circ F.
	\end{equation}
	\begin{remark}
		Attached to a principal $H$-comodule algebra $B\subseteq A$ there is also the group 
		$\tensor[]{\mathrm{Alg}}{_B}{^{\!\!\!\!\!H}}(A)$ of right $H$-comodule algebra morphisms $F:A\to A$ such that $F|_B=\mathrm{id}_B$ (they are also called vertical). The invertibility of the elements in $\tensor[]{\mathrm{Alg}}{^H_B}(A)$ follows from the Hopf-Galois condition and faithful flatness as proved in \cite{schneider1990principal}. This is the algebraic version of the commutative case that principal bundle morphisms are indeed isomorphisms \cite{husemoller1966fibre}. More generally, elements of 
		$F\in\tensor[]{\mathrm{Alg}}{^H}(A)$ when restricted to $B$ give an automorphism $F|_B:B\to B$.    
	\end{remark}

	\begin{prop}
		\label{prop:group_iso}
		Let $B\subseteq A$ be a principal $H$-comodule algebra, the formulas
		\begin{align}
			\label{eq:group_iso1}
			\phi_*^{F}(a\otimes \tilde{a}) &:=F(a)\tilde{a}, \qquad a\otimes \tilde{a}\in\mathcal{C}(A,H), \quad F\in\mathrm{Alg}^{H}(A),\\
			\label{eq:group_iso2}
			F_{\phi_*}(a) &:=\phi_*(a_{(0)}\otimes \tensor{a}{_{(1)}^{\langle1\rangle}})\tensor{a}{_{(1)}^{\langle2\rangle}},\qquad a\in A, \quad \phi_*\in\mathcal{T}^*,
		\end{align}
		provide a group isomorphism between the group $\mathrm{Alg}^{H}(A)$ and
		the group of twits $\mathcal{T}^*$ of the bialgebroid $\mathcal{C}(A,H)$. 
	\end{prop}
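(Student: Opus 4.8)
The plan is to exhibit the assignments $F\mapsto\phi_*^F$ and $\phi_*\mapsto F_{\phi_*}$ of \eqref{eq:group_iso1}--\eqref{eq:group_iso2} as mutually inverse group homomorphisms. Everything hinges on one reformulation: for any $\phi_*\in\mathcal{H}^*$ the right action \eqref{eq:right_H_*_action} on the E--S bialgebroid is simply
\[
(a\otimes\tilde a)\triangleleft\phi_* = F_{\phi_*}(a)\otimes\tilde a ,
\]
that is $F_{\phi_*}\otimes\mathrm{id}_A$ restricted to $\mathcal{C}(A,H)$. To make sense of this I would first check that $F_{\phi_*}$ is a well-defined endomorphism of $A$: the point is that $a_{(0)}\otimes\tensor{a}{_{(1)}^{\langle1\rangle}}$ lies in $\mathcal{C}(A,H)=(A\otimes A)^{coH}$ (with the leg $\tensor{a}{_{(1)}^{\langle2\rangle}}$ retained to balance the tensor product), which follows by computing $\rho^{\otimes}$ on it and applying \eqref{eq:transl_2} together with $h_{(1)}S(h_{(2)})=\epsilon(h)1_H$. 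Dually, $\phi_*^F$ lands in $B=A^{coH}$ because $F(a)\tilde a$ is coaction invariant whenever $a\otimes\tilde a\in(A\otimes A)^{coH}$ (using that $F$ is $H$-colinear), and it is manifestly right $B$-linear for the target-map structure; a short computation with \eqref{eq:transl_3} then shows $(a\otimes\tilde a)\triangleleft\phi_*^F=F(a)\otimes\tilde a$, so $\phi_*^F$ acts as $F\otimes\mathrm{id}_A$.

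Granted these two presentations, the twist axioms for $\phi_*^F$ are immediate: $1_{\mathcal{C}(A,H)}\triangleleft\phi_*^F=F(1_A)\otimes1_A=1_{\mathcal{C}(A,H)}$ gives \eqref{eq:twist_unital}, and multiplicativity of $F$ with the product $(a\otimes\tilde a)(c\otimes\tilde c)=ac\otimes\tilde c\tilde a$ gives \eqref{eq:twist_multiplicative}. For $F_{\phi_*}$ I would read off unitality from \eqref{eq:twist_unital}, and $H$-colinearity from a computation of $\rho\circ F_{\phi_*}$ that reduces to $(F_{\phi_*}\otimes\mathrm{id}_H)\circ\rho$ via \eqref{eq:transl_1}. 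The homomorphism identities and invertibility I would get from the action: unravelling the convolution product as $(\phi_*\psi_*)(h)=\psi_*(h\triangleleft\phi_*)$ and using the group law \eqref{eq:group_law_Alg} yields $(\phi_*^F\phi_*^G)(a\otimes\tilde a)=G(F(a))\tilde a=\phi_*^{F\cdot G}(a\otimes\tilde a)$, hence $\phi_*^F\phi_*^{F^{-1}}=\phi_*^{\mathrm{id}}=\underline{\epsilon}$; symmetrically, $\triangleleft$ being a right action gives $F_{\phi_*\psi_*}=F_{\psi_*}\circ F_{\phi_*}=F_{\phi_*}\cdot F_{\psi_*}$ and $F_{\underline{\epsilon}}=\mathrm{id}_A$ (again by \eqref{eq:transl_3}), so each $F_{\phi_*}$ is invertible with inverse $F_{\phi_*^{-1}}$.

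For the mutual-inverse statements, the identity $F_{\phi_*^F}(a)\otimes\tilde a=F(a)\otimes\tilde a$, applied to the translation element $a_{(0)}\otimes\tensor{a}{_{(1)}^{\langle1\rangle}}$ and then collapsed by multiplying its second leg into the partner $\tensor{a}{_{(1)}^{\langle2\rangle}}$ (via \eqref{eq:transl_3}), gives $F_{\phi_*^F}=F$. In the other direction I would feed the characterization of $\mathcal{C}(A,H)$ from Lemma~\ref{lem:equiv_of _coinvarinats}, namely $a_{(0)}\otimes\tensor{a}{_{(1)}^{\langle1\rangle}}\otimes_B\tensor{a}{_{(1)}^{\langle2\rangle}}\tilde a=a\otimes\tilde a\otimes_B1_A$, into $\phi_*\otimes_B\mathrm{id}$ and use $B\otimes_BA\cong A$ to obtain $\phi_*^{F_{\phi_*}}(a\otimes\tilde a)=F_{\phi_*}(a)\tilde a=\phi_*(a\otimes\tilde a)$. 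Together with the homomorphism identities above this exhibits $\Phi\colon\mathrm{Alg}^{H}(A)\to\mathcal{T}^*$ as a bijective homomorphism with inverse $\Psi$, hence a group isomorphism, provided $\Psi$ genuinely takes values in $\mathrm{Alg}^{H}(A)$.

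The remaining substantial point, and the one I expect to be the main obstacle, is that $F_{\phi_*}$ is an algebra morphism. Here the two presentations of the action give, for elements of $\mathcal{C}(A,H)$, the identity $F_{\phi_*}(ac)\otimes\tilde c\tilde a=F_{\phi_*}(a)F_{\phi_*}(c)\otimes\tilde c\tilde a$, which is precisely the content of \eqref{eq:multiplicative_action_twists}; the difficulty is to strip the spurious factor $\tilde c\tilde a$ and conclude $F_{\phi_*}(ac)=F_{\phi_*}(a)F_{\phi_*}(c)$ for \emph{all} $a,c\in A$. This is delicate because $a\otimes1_A\notin\mathcal{C}(A,H)$ for general $a$ and the translation legs are only defined through the balanced tensor product. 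My plan is to apply \eqref{eq:twist_multiplicative} to the translation elements attached to $a$ and to $c$ while keeping both partners $\tensor{a}{_{(1)}^{\langle2\rangle}}$ and $\tensor{c}{_{(1)}^{\langle2\rangle}}$: by \eqref{eq:transl_4} their product is the translation element attached to $ac$, so multiplying the auxiliary leg back into the retained partners and collapsing twice with \eqref{eq:transl_3} turns the two sides into $F_{\phi_*}(ac)\otimes1_A$ and $F_{\phi_*}(a)F_{\phi_*}(c)\otimes1_A$. Carrying the balanced tensor products correctly through this contraction is the delicate part; once it is settled, $\Psi$ lands in $\mathrm{Alg}^{H}(A)$ and the proof is complete.
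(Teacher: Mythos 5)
Your proposal is correct and takes essentially the same approach as the paper: the same reformulation of the action as $(a\otimes\tilde a)\triangleleft\phi_* = F_{\phi_*}(a)\otimes\tilde a$, the same coinvariance/colinearity checks via the translation-map identities \eqref{eq:transl_1}--\eqref{eq:transl_3}, and the same homomorphism and mutual-inverse verifications. The step you single out as the main obstacle---multiplicativity of $F_{\phi_*}$ for all $a,c\in A$---is resolved in the paper by precisely the contraction you outline: it applies the twist multiplicativity directly to the elements $a_{(0)}\otimes a_{(1)}^{\langle 1\rangle}$ and $c_{(0)}\otimes c_{(1)}^{\langle 1\rangle}$ balanced against their retained legs, combines them with \eqref{eq:transl_4}, and collapses with \eqref{eq:transl_3}, so the spurious factor $\tilde c\tilde a$ never has to be stripped.
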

	\begin{proof}
		Let $F\in\mathrm{Alg}^{H}(A)$ and consider $\phi_*^{F}$ as in \eqref{eq:group_iso1}. 
		Its image lies in $B$ since $F$ is a $H$-comodule map:
		\begin{equation*}
			\rho(F(a)\tilde{a})=F(a_{(0)})\tilde{a}_{(0)}\otimes a_{(1)}\tilde{a}_{(1)}=F(a)\tilde{a}\otimes 1_H,
		\end{equation*}
		for any $a\otimes \tilde{a}\in\mathcal{C}(A,H)$. Moreover, one easily checks that it is a unital right $B$-module morphism. The right action \eqref{eq:right_H_*_action} here reads
		\begin{equation*}
			(a\otimes \tilde{a}) \triangleleft\phi_*^{F}=F(a)\otimes \tilde{a}
		\end{equation*}
		and with this we have
		\begin{equation*}
			\begin{split}
				(ac\otimes \tilde{c}\tilde{a}) \triangleleft\phi_*^{F}&=F(ac)\otimes \tilde{c}\tilde{a}\\
				&=(F(a)\otimes \tilde{a})(F(c)\otimes \tilde{c})\\
				&=\big( (a\otimes \tilde{a}) \triangleleft\phi_*^{F}\big) 
				\big((c\otimes \tilde{c}) \triangleleft\phi_*^{F})\big)
			\end{split}
		\end{equation*}
		for any $a\otimes \tilde{a}$ and $c\otimes \tilde{c}\in\mathcal{C}(A,H)$. 
		Now with a second $G\in\mathrm{Alg}^{H}(A)$, for any $a\otimes \tilde{a}\in\mathcal{C}(A,H)$, one finds
		\begin{equation*}
			\begin{split}
				\phi_*^{F\cdot G}(a\otimes \tilde{a})=(F\cdot G)(a)\tilde{a}=G(F(a))\tilde{a}=\phi_*^{G}(F(a)\otimes \tilde{a})=\phi_*^{F}\phi_*^{G}(a\otimes \tilde{a})
			\end{split}
		\end{equation*}
		and finally $\phi_*^{id_A}=\underline{\epsilon}$. All of this shows that $F\to\phi_*^F$ is a group morphism with $(\phi_*^F)^{-1}=\phi_*^{F^{-1}}$.
		
		Conversely, if $\phi_*\in\mathcal{T}^*$ is a twist the expression \eqref{eq:group_iso2} is well-defined due to the right $B$-linearity of $\phi_*$ and $a_{(0)}\otimes\tensor{a}{_{(1)}^{\langle1\rangle}}\otimes\tensor{a}{_{(1)}^{\langle2\rangle}}\in\mathcal{C}(A,H)\otimes A$. From \eqref{eq:twist_unital} $F_{\phi_*}$ is unital and from \eqref{eq:multiplicative_action_twists}, for any $a,c\in A$ one has
		\begin{equation*}
			\begin{split}
				\rho\circ F_{\phi_*}(a)=\Big(\phi_*(a_{(0)}\otimes\tensor{a}{_{(1)}^{\langle1\rangle}})\tensor{a}{_{(1)}^{\langle1\rangle}} \Big) \otimes a_{(2)}=(F_{\phi_*}\otimes\mathrm{id}_H)\circ\rho(a)
			\end{split}
		\end{equation*}
		\begin{equation*}
			\begin{split}
				F_{\phi_*}(ac)&=\phi_*\left(a_{(0)}c_{(0)}\otimes \tensor{c}{_{(1)}^{\langle1\rangle}}\tensor{a}{_{(1)}^{\langle1\rangle}}\right)\tensor{a}{_{(1)}^{\langle2\rangle}}\tensor{c}{_{(1)}^{\langle2\rangle}}\\
				&=\phi_*\left((a_{(0)}\otimes\tensor{a}{_{(1)}^{\langle1\rangle}})(c_{(0)}\otimes\tensor{c}{_{(1)}^{\langle1\rangle}})\right)\tensor{a}{_{(1)}^{\langle2\rangle}}\tensor{c}{_{(1)}^{\langle2\rangle}}\\
				&= \Big(\phi_*(a_{(0)}\otimes\tensor{a}{_{(1)}^{\langle1\rangle}})\tensor{a}{_{(1)}^{\langle2\rangle}}\phi_*(c_{(0)}\otimes\tensor{c}{_{(1)}^{\langle1\rangle}})\tensor{c}{_{(1)}^{\langle2\rangle}}\\
				&=F_{\phi_*}(a)F_{\phi_*}(c) .
			\end{split}
		\end{equation*}
		To prove these equations, we used 
		properties of the translation map in Proposition~\ref{translation_map_properties}.
		Thus the map $F_{\phi_*}$ is a right $H$-comodule algebra morphism.  Furthermore it is easy to check that $\phi_*\to F_{\phi_*}$ is a group morphism
		\begin{equation*}
			\begin{split}
				F_{\phi_*\psi_*}(a)&=\phi_*\psi_*(a_{(0)}\otimes\tensor{a}{_{(1)}^{\langle1\rangle}})\tensor{a}{_{(1)}^{\langle2\rangle}}\\
				&=\psi_*\left(\phi_*(a_{(0)}\otimes\tensor{a}{_{(1)}^{\langle1\rangle}})\tensor{a}{_{(1)}^{\langle2\rangle}}\otimes \tensor{a}{_{(2)}^{\langle1\rangle}}\right)\tensor{a}{_{(2)}^{\langle2\rangle}}\\
				&=F_{\psi_*}(\phi_*(a_{(0)}\otimes\tensor{a}{_{(1)}^{\langle1\rangle}})\tensor{a}{_{(1)}^{\langle2\rangle}})\\
				&=F_{\psi_*}(F_{\phi_*}(a))=F_{\phi_*}\cdot F_{\psi_*}(a)
			\end{split} 
		\end{equation*}
		for any $a\in A$ and $F_{\underline{\epsilon}}=\mathrm{id}_A$, thus $(F_{\phi_*})^{-1}=F_{\phi_*^{-1}}$. Finally one has
		$$
		\phi_*^{F_{\phi_*}}=\phi_* \, ,  \quad F_{\phi_{*}^{F}}=F ,
		$$
		thus finishing the proof. \end{proof}
	
	\begin{remark}
		The equations \eqref{eq:group_iso1} and \eqref{eq:group_iso2} realizing the isomorphism between $\mathcal{T}^*$ and $\mathrm{Alg}^H(A)$ are the same that give the isomorphism between the group of bisections of $\mathcal{B}(\mathcal{C}(A,H))$ and unital $H$-comodule algebra maps that preserve the base $B$ (vertical gauge transformations) $\tensor[]{\mathrm{Alg}}{_B}{^{\!\!\!\!\!H}}(A)$as proved in \cite{han2021gauge}. The invertibility of a $H$-comodule algebra map that is not vertical must be assumed in general.
	\end{remark}
	\section{The $U(1)$-extension ${\mathcal O}(\mathbb{C}P^{n-1}_q)\subseteq {\mathcal O}(S^{2n-1}_q)$}
	In this final section we study a special case of Hopf--Galois extension and associated Ehresmann--Schauenburg bialgebroid, the (non-commutative) principal $U(1)$-bundles $S^{2n-1}_q \to \mathbb{C}P^{n-1}_q$ over quantum projective spaces.
	
	\subsection{The Hopf--Galois structure}
	Let $q\in(0,1)$ and consider the $*$-algebra generated by elements $\{z_i$,$z^*_i \}_{i=1,\dots,n}$ subjected to relations:
	\begin{align}
		\label{eq:comm_sphere1}
		z_iz_j=qz_jz_i \quad \forall i<j,\quad z^*_iz_j=qz_jz^*_i \qquad \forall i\neq j\\
		\label{eq:comm_sphere2}
		[z^*_1, z_1]=0, \quad
		[z^*_k, z_k]=(1-q^2)\sum_{j=1}^{k-1}z_jz^*_j\qquad \forall 1<k\leq n
	\end{align}
	There is a sphere relation 
	\beq
	\label{eq:comm_sphere3}
	\sum_{j=1}^{n}z_jz^*_j=1 
	\eeq
	which, using \eqref{eq:comm_sphere2}, can be rewrite as
	\begin{equation}
		\label{eq:sphere_rel2}
		\sum_{j=1}^{n}q^{2(n-j)}z^*_jz_j=1 \, .
	\end{equation}
	This algebra is the coordinate algebra of  the \textit{quantum $(2n-1)$-dimensional sphere} 
	and is denoted ${\mathcal O}(S^{2n-1}_q)$. \noindent
	The entries of the projection $P_{ij}:=z^{*}_iz_j$ form a $*$-algebra which is the $q$-deformation of the coordinate algebra of the complex projective space that is denoted by ${\mathcal O}(\mathbb{C}P^{n-1}_q)$. The commutation relations among the generators $P_{ij}$ come from those 
	of ${\mathcal O}(S^{2n-1}_q)$ above.
	
	Let us denote by 
	${\mathcal O}(U(1)) =\mathbb{C}[t,t^{-1}]$ (Laurent polynomials) the Hopf $*$-algebra generated by a coordinate $t$ and its inverse with involution $t^*=t^{-1}$, and with coproduct, counit and antipode given by
	\begin{eqnarray}
		\label{eq:U(1)_Hopf_strc}
		\Delta(t^{\pm})=t^{\pm}\otimes t^{\pm}, & \epsilon(t^{\pm})=1, & S(t^{\pm})=t^{\mp} .
	\end{eqnarray}
	The algebra ${\mathcal O}(S^{2n-1}_q)$ is a right $\mathbb{C}[t,t^{-1}]$-comodule $*$-algebra if endowed with
	\begin{equation}
		\label{eq:U(1)_coaction}
		\rho:{\mathcal O}(S^{2n-1}_q)\longrightarrow {\mathcal O}(S^{2n-1}_q)\otimes\mathbb{C}[t,t^{-1}],\quad z_i\longmapsto z_i\otimes t,\quad z^*_i\longmapsto z^*_i\otimes t^{-1}
	\end{equation}
	One checks that the subalgebra of coaction invariants in ${\mathcal O}(S^{2n-1}_q)$ is ${\mathcal O}(\mathbb{C}P^{n-1}_q)$.
	Then ${\mathcal O}(\mathbb{C}P^{n-1}_q)\subseteq {\mathcal O}(S^{2n-1}_q)$ is a faithfully flat $\mathbb{C}[t,t^{-1}]$-Hopf--Galois extension. This is shown by observing that the canonical map, 
	\begin{equation*}
		\chi:{\mathcal O}(S^{2n-1}_q)\otimes_{{\mathcal O}(\mathbb{C}P^{n-1}_q)}{\mathcal O}(S^{2n-1}_q)\longrightarrow {\mathcal O}(S^{2n-1}_q)\otimes \mathbb{C}[t,t^{-1}]
	\end{equation*}  
	is surjective.  Indeed for the translation map one has:
	\begin{align}
		\label{eq:transl_map_proj}
		&\tau:\mathbb{C}[t,t^{-1}]\longrightarrow {\mathcal O}(S^{2n-1}_q)\otimes_{{\mathcal O}(\mathbb{C}P^{n-1}_q)}{\mathcal O}(S^{2n-1}_q)\nonumber\\ \tau(t)=\sum_{j=1}^{n}&q^{2(n-j)}z^*_j\otimes_{{\mathcal O}(\mathbb{C}P^{n-1}_q)}z_j,\quad \tau(t^{-1})=\sum_{j=1}^{n}z_j\otimes_{{\mathcal O}(\mathbb{C}P^{n-1}_q)}z^*_j
	\end{align} 
	for which one easily verifies, using \eqref{eq:comm_sphere3} and \eqref{eq:sphere_rel2}, 
	that $\mathrm{can}(\tau(t^{\pm}))=1\otimes t^{\pm}$ (which indeed suffices for surjectivity). Moreover the Hopf algebra $\mathbb{C}[t,t^{-1}]$ is cosemisimple and has a bijective antipode. Then from Theorem~1 in \cite{schneider1990principal} one has that ${\mathcal O}(\mathbb{C}P^{n-1}_q)\subseteq {\mathcal O}(S^{2n-1}_q)$ is a  principal $\mathbb{C}[t,t^{-1}]$-comodule algebra.

	\subsection{The $K$-theory of the base and the bialgebroid}
	The expression for the translation map in 
	eq.~\eqref{eq:transl_map_proj} can be written more compactly using the elements in the free module ${\mathcal O}(S^{2n-1}_q)^{n}\simeq {\mathcal O}(S^{2n-1}_q)\otimes\mathbb{C}^{n}$ given by
	\begin{equation}
		\label{eq:vector_projective_space1}
		v=\left(\begin{matrix}
			z^*_1, 
			z^*_2, 
			\dots, 
			z^*_{n-1},
			z^*_n
		\end{matrix}\right) ^t, \qquad	w=\left(\begin{matrix}
			q^{(n-1)}z_1, 
			q^{(n-2)}z_2, 
			\dots,
			q^1 z_{n-1}, 
			z_n 
		\end{matrix}\right)^t \, .
	\end{equation}
	They are easily seem to satisfy
	\begin{equation}
		\tau(t)=w^{\dagger}\dot{\otimes}_{{\mathcal O}(\mathbb{C}P^{n-1}_q)}w, \quad \tau(t^{-1})=v^{\dagger}\dot{\otimes}_{{\mathcal O}(\mathbb{C}P^{n-1}_q)}v \, .
	\end{equation}
	Here the symbol $\dot{\otimes}$ stands for the matrix multiplication composed with the tensor product.
	From \eqref{eq:comm_sphere3} and \eqref{eq:sphere_rel2} 
	both $v$ and $w$ are partial isometries 
	with respect to 
	the 
	restriction to ${\mathcal O}(S^{2n-1}_q)$ of the canonical hermitian product 
	on the free module ${\mathcal O}(S^{2n-1}_q)^n$ given by 
	$
	(\xi,\eta):=\sum_{j=1}^{n}\xi^*_j\eta_j ,
	$
	for $\xi=(\xi_j)$ and $\eta=(\eta_j)\in {\mathcal O}(S^{2n-1}_q)^n$. 
	
	As a consequence the two $n\times n$ matrices $P=vv^{\dagger}$ and $Q=ww^{\dagger}$ are projections with entries in the algebra ${\mathcal O}(\mathbb{C}P^{n-1}_q)$. They define two inequivalent classes \cite{d2015anti,hajac1999projective} in the K-theory $K_0({\mathcal O}(\mathbb{C}P^{n-1}_q))$. 
	The components of the matrices $P$ and $Q$ 
	are
	\begin{equation}
		\label{eq:Q_entries}
		P_{ij}:=z^{*}_iz_j \, , \qquad  Q_{ij}:=q^{(2n-i-j)}z_iz^*_j \, .
	\end{equation}

	\subsection{Full Hopf algebroid structures}
	Referring to the previous section, set $A={\mathcal O}(S^{2n-1}_q)$, $B={\mathcal O}(\mathbb{C}P^{n-1}_q)$ and $H=\mathbb{C}[t,t^{-1}]$. The two column vectors \eqref{eq:vector_projective_space1} give the generators of $\mathcal{C}(A,H)$ in the form of matrices $V:=v\dot{\otimes} v^{\dagger}$ and $W:=w\dot{\otimes} w^{\dagger}\nonumber$ whose entries are
	\beq
	\label{eq:matrices_V/W}
	V_{ij}=z^*_{i}\otimes z_j, \qquad W_{ij}=q^{(2n-i-j)}z_i\otimes z^*_j .
	\eeq
	Clearly, these are coaction invariant in $A\otimes A$ for the diagonal coaction. 
	In fact, any coaction invariant element in $A\otimes A$ can be written as a combination of $V_{ij}$'s 
	and $W_{ij}$'s. This follows from a simple argument: with the coaction in \eqref{eq:U(1)_coaction}, the $z^*_i$'s are of weight $-1$ and the $z_i$'s of weight $1$. Then any element of weight $0$ in $A\otimes A$, a coaction invariant, is a combination of elements of the form $z^*_i\otimes z_j$ and $z_i\otimes z^*_j$. The latter are proportional to $V_{ij}$ and $W_{ij}$.

	The commutations relations of the components of the matrices $V$ and $W$ in 
	\eqref{eq:matrices_V/W}
	can be derived from eqs.\eqref{eq:comm_sphere1} and \eqref{eq:comm_sphere2}.
	For the first we have
	\begin{align}
		\label{eq:comm_rel_V}
		V_{ik}V_{jk}&=q^{-1}V_{jk}V_{ik},\quad V_{ki}V_{kj}=q^{-1}V_{kj}V_{ki},\qquad \forall i<j \nonumber \\
		V_{ik}V_{jl}&=V_{jl}V_{ik},\quad V_{il}V_{jk}=q^{-2}V_{jk}V_{il},\qquad \forall i<j,l<k
	\end{align}
	while for the entries of $W$ one finds
	\begin{align}
		\label{eq:comm_rel_W}
		W_{ik}W_{jk}&=qW_{jk}W_{ik},\quad W_{ki}W_{kj}=qW_{kj}W_{ki},\quad \forall i<j \nonumber \\
		W_{ik}W_{jl}&=W_{jl}W_{ik},\quad W_{il}W_{jk}=q^2W_{jk}W_{il},\quad \forall i<j,l<k \, .
	\end{align}
	Following \eqref{eq:source_bialgeboid} and \eqref{eq:target_bialgeboid} the source and target maps are given by
	\begin{align}
		\label{eq:source_target_projective_sp}
		s: P_{ij}\longmapsto\sum_{k=1}^{n}q^{(j-k)}V_{ik}W_{jk},\quad Q_{ij}\longmapsto\sum_{k=1}^{n}q^{(k-j)}W_{ik}V_{jk} \nonumber \\
		t:P_{ij}\longmapsto\sum_{k=1}^{n}q^{(i-k)}V_{kj}W_{ki},\quad Q_{ij}\longmapsto\sum_{k=1}^{n}q^{(k-i)}W_{kj}V_{ki} \, . 
	\end{align}
	Thus the two embeddings of $B$ into $\mathcal{C}(A,H)$ are combinations of $V_{ij}$ and $W_{ij}$.

	For the coproduct and counit we find a similar formula as before
	\begin{align}
		\underline{\Delta}:V_{ij}\longmapsto\sum_{k=1}^{n}&V_{ik}\otimes_BV_{kj},\quad W_{ij}\longmapsto\sum_{k=1}^{n}W_{ik}\otimes_BW_{kj}\\
		&\underline{\epsilon}:V_{ij}\longmapsto P_{ij},\quad W_{ij}\longmapsto Q_{ij} \, .
	\end{align}
	Having set the notation for the bialgebroid we are ready for the next
	\begin{prop}
		\label{prop:antipode_proj}
		Define on the generators of $\mathcal{C}(A,H)$ the map
		\begin{equation}
			\label{eq:anitpode}
			\underline{S}: V_{ij}\longmapsto q^{(j-i)}W_{ji}, \quad W_{ij}\longmapsto q^{(i-j)}V_{ji} \, .
		\end{equation}
		Then, this is an antipode of $\mathcal{C}(A,H)$ with inverse given by
		\begin{equation}
			\label{eq:antipode_inv}
			\underline{S}^{-1}:V_{ij}\longmapsto q^{(i-j)}W_{ji}, \quad W_{ij} \longmapsto q^{(j-i)}V_{ji}
		\end{equation}
	\end{prop}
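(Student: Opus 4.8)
The plan is to realise $\underline{S}$ as a twist of the flip, thereby reducing the statement to the machinery already in place. Since $H=\mathbb{C}[t,t^{-1}]$ is commutative, Corollary~\ref{cor:antipode_comm} guarantees that $\mathrm{flip}_A$ is an antipode for $\mathcal{C}(A,H)$. By Theorem~\ref{thm:Bohm_thm} every other antipode has the form $h\mapsto\mathrm{flip}_A(h\triangleleft\phi_*)$ for a twist $\phi_*\in\mathcal{T}^*$, and by Proposition~\ref{prop:group_iso} the twists are parametrised through \eqref{eq:group_iso1} by automorphisms $F\in\mathrm{Alg}^{H}(A)$. So first I would exhibit the relevant $F$, and then check that the corresponding twisted flip is exactly \eqref{eq:anitpode}.

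First I would define $F\colon A\to A$ on generators by $F(z_i)=q^{-2(n-i)}z_i$ and $F(z_i^*)=q^{2(n-i)}z_i^*$, with inverse candidate $F^{-1}(z_i)=q^{2(n-i)}z_i$, $F^{-1}(z_i^*)=q^{-2(n-i)}z_i^*$. The decisive feature is that $F$ scales $z_i$ and $z_i^*$ by mutually inverse scalars, $q^{-2(n-i)}\cdot q^{2(n-i)}=1$. Granting this, the verification that $F$ is a unital comodule algebra automorphism is routine: each side of the quadratic relations \eqref{eq:comm_sphere1} is a single monomial carrying the same overall scaling factor, so those relations are preserved; the inhomogeneous commutators \eqref{eq:comm_sphere2} and the sphere relation \eqref{eq:comm_sphere3} are preserved precisely because every monomial $z_jz_j^*$ and $z_k^*z_k$ is fixed by $F$; and $F$ is $H$-colinear for the coaction \eqref{eq:U(1)_coaction} because it acts by a scalar on each weight vector, hence commutes with $\rho$. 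Thus $F\in\mathrm{Alg}^{H}(A)$ and, by Proposition~\ref{prop:group_iso}, $\phi_*^{F}$ is a twist with $(a\otimes\tilde a)\triangleleft\phi_*^{F}=F(a)\otimes\tilde a$.

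Next, by Theorem~\ref{thm:Bohm_thm} the map $\underline{S}'(a\otimes\tilde a):=\mathrm{flip}_A\big((a\otimes\tilde a)\triangleleft\phi_*^{F}\big)=\tilde a\otimes F(a)$ is an antipode for $\mathcal{C}(A,H)$. It then remains to evaluate $\underline{S}'$ on the generators of \eqref{eq:matrices_V/W} and compare with \eqref{eq:anitpode}. Using $z_j\otimes z_i^*=q^{(i+j-2n)}W_{ji}$ and $z_j^*\otimes z_i=V_{ji}$, a one-line computation gives $\underline{S}'(V_{ij})=q^{2(n-i)}z_j\otimes z_i^*=q^{(j-i)}W_{ji}$ and $\underline{S}'(W_{ij})=q^{(2n-i-j)}z_j^*\otimes F(z_i)=q^{(i-j)}V_{ji}$, which is exactly \eqref{eq:anitpode}; since $\underline{S}'$ and $\underline{S}$ agree on generators and both are anti-algebra morphisms, they coincide. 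For the inverse, the same theorem yields $\underline{S}'^{-1}(h)=\mathrm{flip}_A(h)\triangleleft\phi_*^{F^{-1}}$, and the identical computation with $F$ replaced by $F^{-1}$ reproduces \eqref{eq:antipode_inv}; alternatively one checks on generators that the anti-algebra morphisms \eqref{eq:anitpode} and \eqref{eq:antipode_inv} compose to the identity, as $q^{(j-i)}q^{(i-j)}=1$.

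I expect the only genuine obstacle to be the well-definedness of $F$ (equivalently, of $\underline{S}$) as a map on the quotient algebra: one must ensure that the scalings respect \emph{all} defining relations of $\mathcal{O}(S^{2n-1}_q)$, and in particular the inhomogeneous commutators \eqref{eq:comm_sphere2}, which is exactly where the balancing $q^{-2(n-i)}q^{2(n-i)}=1$ is used. This is also the reason the twist route is preferable to a direct verification of the antipode axioms \eqref{eq:antipode_2}--\eqref{eq:antipode_3}: the latter would require manipulating the balanced tensor product over $B=\mathcal{O}(\mathbb{C}P^{n-1}_q)$ by hand, whereas here both the well-definedness and the anti-algebra property of $\underline{S}'$ are automatic, coming from $\mathrm{flip}_A$ being an anti-algebra morphism together with the multiplicativity \eqref{eq:twist_multiplicative} of the twist action.
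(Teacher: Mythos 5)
Your proof is correct, but it takes a genuinely different route from the paper's. The paper proves Proposition~\ref{prop:antipode_proj} by direct verification: it checks anti-multiplicativity of $\underline{S}$ against the commutation relations \eqref{eq:comm_rel_V}--\eqref{eq:comm_rel_W}, verifies $\underline{S}\circ t=s$ on the images \eqref{eq:source_target_projective_sp} of the generators $P_{ij},Q_{ij}$ of $B$, and then establishes the axioms \eqref{eq:antipode_2}--\eqref{eq:antipode_3} by explicit computations with the coproduct, the $\otimes_B$-relations \eqref{eq:tensor_B} and the counit identities \eqref{eq:counital_bialgebroid}. You instead assemble the machinery already in the paper: $\mathrm{flip}_A$ is an antipode because $\mathbb{C}[t,t^{-1}]$ is commutative (Corollary~\ref{cor:antipode_comm}); your scaling map $F(z_i)=q^{-2(n-i)}z_i$, $F(z_i^*)=q^{2(n-i)}z_i^*$ does lie in $\mathrm{Alg}^{H}(A)$ (the cancellation $q^{-2(n-i)}q^{2(n-i)}=1$ is exactly what preserves \eqref{eq:comm_sphere2}--\eqref{eq:comm_sphere3}, and colinearity holds since $F$ scales weight vectors), hence yields a twist $\phi_*^F$ by Proposition~\ref{prop:group_iso}; and Theorem~\ref{thm:Bohm_thm} makes $h\mapsto\mathrm{flip}_A(h\triangleleft\phi_*^F)$ an antipode. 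Your evaluation on generators, giving $q^{2(n-i)}z_j\otimes z_i^*=q^{(j-i)}W_{ji}$ and $q^{(2n-i-j)}q^{-2(n-i)}z_j^*\otimes z_i=q^{(i-j)}V_{ji}$, and likewise for the inverse, is exponent-for-exponent correct. What your route buys is well-definedness for free: \eqref{eq:anitpode} prescribes $\underline{S}$ only on generators, and the paper's verification tacitly assumes that checking the listed relations suffices for $\underline{S}$ to extend to all of $\mathcal{C}(A,H)$, whereas your $\underline{S}'$ is globally defined from the outset and anti-multiplicative automatically. It also makes the paper's subsequent observation, that $\mathrm{flip}$ and $\underline{S}$ differ by a twist $\psi_*$, an immediate byproduct with $\psi_*=(\phi_*^F)^{-1}=\phi_*^{F^{-1}}$; incidentally your formulas give $\phi_*^{F^{-1}}(W_{ij})=q^{2(n-i)}Q_{ij}$, suggesting the indices in the paper's displayed exponent for $\psi_*(W_{ij})$ are transposed. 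What the paper's route buys is self-containedness (the proposition does not then rest on Theorem~\ref{thm:Bohm_thm} and Proposition~\ref{prop:group_iso}) and an explicit display of how $\underline{S}$ interacts with the coproduct and counit; but as a proof, yours is the cleaner reduction.
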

	\begin{proof}
		For the property of an antipode, firstly we have that $\underline{S}$ is an anti-algebra morphism: 
		for any $i<j$ one computes \begin{equation*}
			\begin{split}
				\underline{S}(V_{jk}) \underline{S}(V_{ik})&=q^{(k-j)}q^{(k-i)}W_{kj}W_{ki}\\
				&=q^{-1} (q^{(j-k)}W_{jk})(q^{(i-k)}W_{ik})\\
				&=q^{-1} \underline{S}(V_{ik})\underline{S}(V_{jk})
			\end{split}
		\end{equation*}
		when we used eq.~\eqref{eq:comm_rel_V}. The same goes for the generators $W_{ij}$.
		
		Looking at \eqref{eq:source_target_projective_sp} and the definition of $\underline{S}$ in this case, for eq.~\eqref{eq:antipode_1} one has
		\begin{equation*}
			\begin{split}
				\underline{S}(t(P_{ij}))&=\sum_{k=1}^{n}q^{(i-k)}\underline{S}(W_{ki})\underline{S}(V_{kj})\\
				&=\sum_{k}^{n}q^{(i-k)}q^{(k-i)}V_{ik}q^{(j-k)}W_{jk}\\
				&=\sum_{k=1}^{n}q^{(j-k)}V_{ik}W_{jk}=s(P_{ij})
			\end{split}
		\end{equation*}
		and with similar computations one finds the same for $Q_{ij}$.
		
		For the last property, \eqref{eq:antipode_2}, firstly we have
		\begin{equation*}
			\underline{\Delta}(V_{ij})=\sum_{k=1}^{n}V_{ik}\otimes_BV_{kj},\quad \underline{\Delta}(\underline{S}(V_{ik}))=q^{(k-i)}\sum_{l=1}^{n}W_{kl}\otimes_BW_{li} .
		\end{equation*}
		Then eq.~\eqref{eq:antipode_2} becomes
		\begin{equation*}
			\begin{split}
				\sum_{k,l=1}^{n}q^{(k-i)}W_{kl}V_{kj}\otimes_BW_{li}&=\sum_{k=1}^{n}\left(\sum_{l=1}^{n}q^{(k-j)}W_{kl}V_{kj}\right)\otimes_Bq^{(j-i)}W_{li}\\
				&=\sum_{l=1}^{n}t(Q_{jl})\otimes_Bq^{(j-i)}W_{li}\\
				&=1_{\mathcal{C}(A,H)}\otimes_Bq^{(j-i)}\sum_{l=1}^{n}s(Q_{jl})W_{li}\\
				&=1_{\mathcal{C}(A,H)}\otimes_Bq^{(j-i)}W_{ji}=1_{\mathcal{C}(A,H)} .\otimes_B\underline{S}(V_{ij})
			\end{split}
		\end{equation*}
		In the second line we used the tensor product over $B$ in \eqref{eq:tensor_B}, while in the third one the relation
		$
		\sum_{l=1}^{n}s(Q_{jl})W_{li}=W_{ji}
		$
		which is just  \eqref{eq:counital_bialgebroid}. The same goes for the generators $W_{ij}$. 
		
		We prove eq.~\eqref{eq:antipode_3} now starting with $W_{ij}$. Firstly, 
		\begin{equation*}
			\underline{\Delta}(W_{ij})=\sum_{k=1}^{n}W_{ik}\otimes_BW_{kj},\quad \underline{\Delta}(\underline{S}^{-1}(W_{kj}))=q^{(j-k)}\sum_{l=1}^{n}V_{jl}\otimes_BV_{lk} .
		\end{equation*}
		Thus, one gets
		\begin{equation*}
			\begin{split}
				\sum_{k,l=1}^{n}q^{(j-k)}V_{jl}\otimes_BV_{lk}W_{ik}&=q^{(j-i)}\sum_{l=1}^{n}V_{jl}\otimes_B\sum_{k=1}^{n}q^{(i-k)}V_{lk}W_{ik}\\
				&=q^{(j-i)}\sum_{l=1}^{n}V_{jl}\otimes_Bs(P_{li})\\
				&=q^{(j-i)}\sum_{l=1}^{n}t(P_{li})V_{jl}\otimes_B 1_{\mathcal{C}(A,H)}\\
				&=q^{(j-i)}V_{ji}\otimes_B1_{\mathcal{C}(A,H)}=\underline{S}^{-1}(W_{ij})\otimes_B1_{\mathcal{C}(A,H)}.
			\end{split}
		\end{equation*}
		Between the second and third line, we used again the structure \eqref{eq:tensor_B} and 
		\eqref{eq:counital_bialgebroid} in the form
		$
		\sum_{l=1}^{n}t(P_{li})V_{jl}=V_{ji} .
		$
		Similar computations hold for the generators $V_{ij}$.
		
		Being $V_{ij}$ and $W_{ij}$ generators of $\mathcal{C}(A,H)$ we can conclude that the map $\underline{S}$ 
		of eq.~\eqref{eq:anitpode} is an invertible antipode of the Ehresmann--Schauenburg bialgebroid associated to the $\mathbb{C}[t,t^{-1}]$-Hopf--Galois extension ${\mathcal O}(\mathbb{C}P^{n-1}_q)\subseteq {\mathcal O}(S^{2n-1}_q)$, bialgebroid that is indeed a Hopf algebroid.
	\end{proof}
	
	We are thus in the situation of having two antipodes on the bialgebroid associated with the extension ${\mathcal O}(\mathbb{C}P^{n-1}_q)\subseteq {\mathcal O}(S^{2n-1}_q)$. 
	One is just the map \eqref{eq:anitpode} as we have shown above. For the second one, 
	being the Hopf algebra $\mathbb{C}[t,t^{-1}]$ commutative, 
	we saw in Proposition~\ref{prop:flip_antipode} and \ref{cor:antipode_comm} that the map $\mathrm{flip}_{{\mathcal O}(S^{2n-1}_q)}$ is an antipode, its action on the generators in
	\eqref{eq:matrices_V/W} is given by
	\begin{equation*}
		\mathrm{flip}_{{\mathcal O}(S^{2n-1}_q)}(V_{ij})=q^{(i+j-2n)}W_{ji},\quad \mathrm{flip}_{{\mathcal O}(S^{2n-1}_q)}(W_{ij})=q^{(2n-i-j)}V_{ji}.
	\end{equation*}
	Using the theory developed in subsection 2.2, the two are related by the twist
	\begin{equation}
		\psi_*(V_{ij})=q^{2(i-n)}P_{ij},\quad \psi_*(W_{ij})=q^{2(n-j)}Q_{ij},
	\end{equation}
	and thus by Theorem~\ref{thm:Bohm_thm} we get
	\begin{equation}
		\mathrm{flip}_{{\mathcal O}(S^{2n-1}_q)}(\cdot)=\underline{S}(\cdot\triangleleft\psi_*).
	\end{equation} 
	
	We retrieve the whole group of twists from Proposition~\ref{prop:group_iso}
	\begin{prop}
		\label{prop:twist_proj_sp}
		For the ${\mathcal O}(U(1))$-Hopf--Galois extension ${\mathcal O}(\mathbb{C}P^{n-1}_q)\subseteq {\mathcal O}(S^{2n-1}_q)$ for the group of twists one has $\mathcal{T}^*\simeq(\mathbb{C}^*)^{n}$.
	\end{prop}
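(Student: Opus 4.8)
The plan is to deduce the statement from Proposition~\ref{prop:group_iso}, which identifies the group of twists $\mathcal{T}^*$ of the E--S bialgebroid $\mathcal{C}(A,H)$ with the group $\mathrm{Alg}^H(A)$ of unital right $H$-comodule algebra automorphisms of $A$, here with $A={\mathcal O}(S^{2n-1}_q)$ and $H=\mathbb{C}[t,t^{-1}]$. Since the coaction \eqref{eq:U(1)_coaction} is nothing but the $\mathbb{Z}$-grading of $A$ by $t$-weight, with $z_i$ of weight $+1$ and $z_i^*$ of weight $-1$, an $H$-colinear unital algebra map is exactly a weight-preserving unital algebra endomorphism. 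Thus the whole problem reduces to computing the group of weight-preserving algebra automorphisms of $A$, and I will show it is $(\mathbb{C}^*)^n$.

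First I would exhibit the obvious candidates. For $\vec\lambda=(\lambda_1,\dots,\lambda_n)\in(\mathbb{C}^*)^n$ set $F_{\vec\lambda}(z_i)=\lambda_i z_i$ and $F_{\vec\lambda}(z_i^*)=\lambda_i^{-1}z_i^*$. One checks directly that this respects all the defining relations: the commutation relations \eqref{eq:comm_sphere1} are balanced in weight and survive any such rescaling; the relations \eqref{eq:comm_sphere2} are preserved precisely because each monomial $z_jz_j^*$ is rescaled by $\lambda_j\lambda_j^{-1}=1$; and the sphere relation \eqref{eq:comm_sphere3} together with $F_{\vec\lambda}(1)=1$ is what forces the coefficient of $z_i^*$ to be $\lambda_i^{-1}$. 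Hence each $F_{\vec\lambda}$ lies in $\mathrm{Alg}^H(A)$, and $\vec\lambda\mapsto F_{\vec\lambda}$ is an injective group homomorphism $(\mathbb{C}^*)^n\to\mathrm{Alg}^H(A)$ for the product \eqref{eq:group_law_Alg}.

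The core of the argument is surjectivity: every $F\in\mathrm{Alg}^H(A)$ is of the form $F_{\vec\lambda}$. Weight preservation gives $F(z_i)$ of weight $+1$ and $F(z_i^*)$ of weight $-1$. I would next show that $F(z_i)$ is \emph{linear}, i.e. $F(z_i)=\sum_j a_{ij}z_j$ with $a_{ij}\in\mathbb{C}$. A weight-$+1$ element has odd total degree in the generators, so the only way to exclude the admissible higher degrees $3,5,\dots$ is a filtration argument: the polynomial-degree filtration on $A$ has an associated graded algebra in which the inhomogeneous sphere relation \eqref{eq:comm_sphere3} degenerates to the homogeneous $\sum_j z_j z_j^*=0$, a connected graded algebra generated in degree one whose degree-one weight-$(\pm1)$ parts are exactly $\mathrm{span}(z_j)$ and $\mathrm{span}(z_j^*)$. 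Since the images of the generators must again generate, the leading symbols of $F(z_i)$ and $F(z_i^*)$ are forced into degree one, whence these elements are genuinely linear. With linearity in hand, imposing $z_iz_j=qz_jz_i$ for $i<j$ and normal-ordering gives, for $k<l$, the identity $(q-q^{-1})\,a_{il}a_{jk}=0$ together with $a_{ik}a_{jk}=0$ for $i\neq j$; as $q\in(0,1)$ we have $q\neq q^{-1}$, so the matrix $(a_{ij})$ has at most one nonzero entry per column, and invertibility of $F$ makes it a monomial (permutation times diagonal) matrix. The asymmetric relations \eqref{eq:comm_sphere2}, whose right-hand side sums only over $j<k$ and so breaks the symmetry among the indices, rule out any nontrivial permutation, leaving $F(z_i)=\lambda_i z_i$; finally \eqref{eq:comm_sphere3} and unitality pin down $F(z_i^*)=\lambda_i^{-1}z_i^*$, so $F=F_{\vec\lambda}$. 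This yields $\mathrm{Alg}^H(A)\cong(\mathbb{C}^*)^n$ and hence, by Proposition~\ref{prop:group_iso}, $\mathcal{T}^*\cong(\mathbb{C}^*)^n$.

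I expect the main obstacle to be exactly the step ruling out higher-degree terms, namely proving that $F$ respects the polynomial-degree filtration and that its leading symbols generate the associated graded algebra. The subtlety is that $A$ is only filtered, not graded, by total degree, because the sphere relation is inhomogeneous, and moreover $F|_B$ need not preserve the degree filtration of $B={\mathcal O}(\mathbb{C}P^{n-1}_q)$ (indeed $F_{\vec\lambda}$ rescales $P_{ij}=z_i^*z_j$ by $\lambda_i^{-1}\lambda_j$). Once this degree control is secured, the remaining rigidity is a clean consequence of $q\neq q^{-1}$, which is precisely where the genuinely quantum nature of the sphere enters.
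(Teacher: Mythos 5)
Your reduction via Proposition~\ref{prop:group_iso} and the exhibition of the torus of diagonal rescalings $F_{\vec\lambda}$ coincide with the paper's first steps; the divergence, and the problem, lies in your surjectivity argument. The gap you flag yourself --- proving that $F(z_i)$ has polynomial degree one --- is not a repairable technicality of the filtration scheme you sketch: that scheme cannot work, because every input it uses is insensitive to the value of $q$, while the statement it is meant to prove is \emph{false} at $q=1$. Indeed, for $q=1$ and $n=2$ the algebra is the commutative $\mathbb{C}[z_1,z_2,z_1^*,z_2^*]/(z_1z_1^*+z_2z_2^*-1)$, and the assignment
\begin{equation*}
F(z_1)=z_1+z_1^*z_2^2,\qquad F(z_2)=z_2,\qquad F(z_1^*)=z_1^*,\qquad F(z_2^*)=z_2^*-(z_1^*)^2z_2
\end{equation*}
is a unital, weight-preserving (hence $H$-colinear) algebra automorphism: it preserves the sphere relation because the terms $(z_1^*)^2z_2^2$ cancel, and its inverse is the same formula with the signs of the added terms reversed. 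For this $F$ all the premises of your argument hold --- the images of the generators generate $A$, the associated graded of the degree filtration is the quadric cone $\sum_j z_jz_j^*=0$, connected and generated in degree one --- and yet the leading symbol of $F(z_1)$ has degree three. So the inference ``the images of the generators must again generate, hence the leading symbols are forced into degree one'' is invalid: an automorphism of a filtered algebra need not be filtered, and the leading symbols of its values on generators need not generate the associated graded. Consequently the $q$-commutation relations \eqref{eq:comm_sphere1}--\eqref{eq:comm_sphere2} must enter \emph{before} any reduction to linear maps; linearity is itself a quantum rigidity phenomenon, not a filtration phenomenon.

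This is also exactly where your route departs from the paper's, which never attempts degree control. The paper expands the images in the $B$-module structure of the weight-homogeneous components (the weight-$\pm1$ parts of $A$ are generated over $B={\mathcal O}(\mathbb{C}P^{n-1}_q)$ by the $z_j$, respectively the $z_j^*$), asserting the diagonal form $F(z_i)=X_iz_i$, $F(z_i^*)=Y_iz_i^*$ with $X_i,Y_i\in B$; it then uses invertibility of $F$ together with the fact that the only invertible elements of $B$ are the nonzero scalars to get $X_i\in\mathbb{C}^*$, and unitality plus the sphere relation to get $Y_i=X_i^{-1}$. To repair your proof while keeping its structure, you should replace the filtration step by the exact expansion $F(z_i)=\sum_jX_{ij}z_j$ with $X_{ij}\in B$ (no approximation or degree argument is needed for this), and then run your commutation-relation computation with these $B$-valued coefficients --- this is what the paper's terse ``one easily checks'' is hiding; combined with invertibility of $F$ and the triviality of the unit group of $B$, it yields the diagonal scalar form. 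Your downstream analysis is correct but sits below the step that fails; as a minor point, relation \eqref{eq:comm_sphere1} alone already forces the permutation underlying the monomial matrix to be order-preserving, hence trivial, so \eqref{eq:comm_sphere2} is not needed there.
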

	\begin{proof}
		We first work out the group $\mathrm{Alg}^{{\mathcal O}(U(1)) } ({\mathcal O}(S^{2n-1}_q))$ and then use the 
		isomorphism of Proposition~\ref{prop:group_iso} before. One easily checks that any right $H$-comodule automorphism of $A$ has the form
		\begin{eqnarray}
			F(z_i)=X_iz_i, & & F(z_i^*)=Y_iz^*_i, \nonumber
		\end{eqnarray}
		where $X_i, Y_i\in {\mathcal O}(\mathbb{C}P^{n-1}_q)$. The invertible ones are such that $X_i, Y_i\in\mathbb{C}^*$ (non-zero complex numbers), since they exhauast  all the invertible elements in ${\mathcal O}(\mathbb{C}P^{n.1}_q)$. Moreover we require $F(1_A)=1_A$. The above expression is clearly an algebra morphism if $Y_i=X^{-1}_i$ (so that it preserves the sphere relations). 
		Using eq.~\eqref{eq:group_iso1} the action of twists on the generators 
		of $\mathcal{C}({\mathcal O}(S^{2n-1}_q),{\mathcal O}(U(1))$ is
		$$
		\phi_*^F(V_{ij})=X^{-1}_iP_{ij} , \qquad \phi_*^F(W_{ij})=X_iQ_{ij} .
		$$
		This finishes the proof.
	\end{proof}

	\bigskip
	\noindent{\bf Acknowledgements.}
	
	LD acknowledges partial support from the "National Group for Mathematical Physics" (GNFM--INdAM).
	LD and JZ acknowledge partial support from the EU Horizon Project "Graph Algebras" Grant agreement ID: 101086394. GL acknowledges partial support from INFN, Iniziativa Specifica GAST, 
	from the ``National Group for Algebraic and Geometric Structures, and their
	Applications'' (GNSAGA--INdAM) and from 
	the PNRR PE National Quantum Science and Technology Institute (PE0000023).

	\bibliography{bib}{}
	\bibliographystyle{plain} 
\end{document}